\numberwithin{equation}{section}
\newtheorem*{theorem*}{Theorem}
\newtheorem{theorem}{Theorem}[section]
\newtheorem{lemma}[theorem]{Lemma}
\newtheorem{corollary}[theorem]{Corollary}
\newtheorem{remark}[theorem]{Remark}
\newtheorem{proposition}[theorem]{Proposition}
\newtheorem{conjecture}[theorem]{Conjecture}
\def\del{\partial}
\def\dbar{\bar\partial}
\def\ddbar{\del\dbar}
\newcommand{\Cc}{\mathcal{C}}
\def\del{\partial}
\DeclareMathOperator{\Ric}{Ric}
\def\Ent{{\rm Ent}}
\def\Ec{\mathcal{E}}
\newcommand{\Kenergy}{E}
\newcommand{\AM}{{\rm AM}}
\newcommand{\setdef}{\ \mid \ }
\newcommand{\Hnormalize}{\mathcal{H}_{0}}
\newcommand{\Homega}{\mathcal{H}_{\omega}}
\newcommand{\Tr}{{\rm Tr}}
\newcommand{\AubinI}{I}
\newcommand{\PSH}{{\rm PSH}}
\newcommand{\AUT}{{\rm Aut}}
\title{Regularity of weak minimizers of the K-energy and applications to properness and K-stability}
\author{Robert J. Berman, Tam\'as Darvas, Chinh H. Lu}
\date{\vspace{-0.2in}}
\begin{document}
\maketitle
\begin{abstract}
Let $(X,\omega)$ be a compact K\"ahler manifold and $\mathcal H$ the space of K\"ahler metrics cohomologous to $\omega$. If a csck metric exists in $\mathcal H$, we show that all finite energy minimizers of the extended K-energy are smooth csck metrics, partially confirming a conjecture of Y.A. Rubinstein and the second author. As an immediate application, we obtain that existence of a csck metric in $\mathcal H$ implies J-properness of the K-energy, thus confirming one direction of a conjecture of Tian. Exploiting this properness result we prove that an ample line bundle $(X,L)$ admitting a csck metric in $c_1(L)$ is $K$-polystable. When the automorphism group is finite, the properness result, combined with a result of Boucksom-Hisamoto-Jonsson, also implies that  $(X,L)$ is uniformly K-stable. 
\end{abstract}

\section{Introduction and main results}
Let $(X,J,\omega)$ be a compact connected K\"ahler manifold. By 
\[
\Homega = \{v \in \Cc^\infty(X) \setdef  \omega_{v}:=\omega + i\ddbar v > 0  \}
\] 
we denote the space of K\"ahler potentials. By the $\ddbar$-lemma of Hodge theory, up to a constant, this space is in a one-to-one correspondence with $\mathcal H$, the space of K\"ahler metrics cohomologous to $\omega$. The problem of finding canonical metrics in $\mathcal H$ goes back to Calabi in 50's. In this work we will point necessary conditions under which $\mathcal H$ admits constant scalar curvature K\"ahler (csck) metrics, in terms of energy properness.

We now elaborate on the terminology necessary to state our main results. To have a one-to-one correspondence between potentials and metrics, we consider  the space
$$\Hnormalize := \Homega \cap \AM^{-1}(0),$$ 
and we always work on the level of potentials unless specified otherwise (for the definition of $\AM$ see \eqref{eq: def MA energy} below). The connected Lie group of holomorphic automorphisms   
$$G:=\textup{Aut}_0(X,J)$$ 
acts naturally on $\mathcal H$ via pullbacks, hence it also acts on $\Hnormalize$ (see \cite[Section 5.2]{DR17} for a precise description of this action on the level of potentials). 

Motivated by results and ideas in conformal geometry, in the 90's Tian introduced the notion of ``J-properness" on $\Homega$ \cite[Definition 5.1]{t1} in terms of Aubin's nonlinear
 energy functional $J_\omega$ and the Mabuchi K-energy 
 $\Kenergy$. This condition says that for any $u_j \in \Homega$ we have 
\begin{equation}\label{eq: Jpropeness_of_K}
J_\omega(u_j)  \to \infty \ \ \ \textup{ implies } \ \ \ \Kenergy(u_j) \to \infty.
\end{equation}
We refer to Section 2 for the precise definitions of $J_\omega$ and $\Kenergy$. 

Tian conjectured that existence of constant scalar curvature K\"ahler (csck) metrics in $\Homega$ should be equivalent to J-properness of the K-energy $\Kenergy$ \cite[Remark 5.2]{t1},\cite{tianbook} and this was proved for Fano manifolds with $G$ trivial \cite{t2,tz}.  In \cite[Theorem 1]{pssw} the ``strong form" of the J-properness condition \eqref{eq: Jpropeness_of_K} was obtained, confirming another conjecture of Tian from \cite{t2} (for Fano manifold with trivial $G$), saying that the K-energy grows at least linearly with respect to the J-functional. This stronger form has been later adopted in the literature, sometimes referred to as ``coercivity". 

When $G$ is non-trivial it was known that the conjecture cannot, in general, hold as stated above and numerous modifications were proposed by Tian (see \cite[Conjecture 7.12]{tianbook}, \cite{t4}). In \cite{DR17}, Y.A. Rubinstein and the second named author disproved one of these conjectures, proved the remaining ones for general Fano manifolds, and the following conjecture was stated for general K\"ahler manifolds:

\begin{conjecture}[Conjecture 2.8 in \cite{DR17}] \label{conj: Tianmodified} Suppose $(X,\omega)$ is a K\"ahler manifold. There exists a csck metric cohomologous to $\omega$ if and only if for some $C,D>0$ we have 
\[
\Kenergy(u) \geq C \inf_{g \in G}J_\omega(g.u) - D, \ \ u \in \Homega.
\]
\end{conjecture}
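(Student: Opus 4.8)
The converse implication, that $G$-coercivity forces the existence of a cscK metric, is essentially the Chen--Cheng resolution of the variational side of Tian's conjecture combined with the Darvas--Rubinstein equivalence between $J_\omega$- and $d_1$-coercivity, so I would concentrate on the direct implication,
\[ u_0\in\mathcal H \text{ cscK}\ \Longrightarrow\ \mathcal K(u)\ \ge\ C\,\inf_{g\in G}J_\omega(g.u)-D,\qquad u\in\mathcal H. \]
First pass to the metric completion $(\mathcal{E}^1,d_1)$ of $(\mathcal H,d_1)$, on which $\mathcal K$ extends as a $d_1$-lower semicontinuous functional that is convex along finite-energy geodesics and on which $G$ acts by $d_1$-isometries; by the Darvas--Rubinstein dictionary it suffices to prove $d_1$-coercivity modulo $G$, namely $\mathcal K(u)\ge C'd_{1,G}(0,u)-D'$ with $d_{1,G}(u,v):=\inf_{g\in G}d_1(u,g.v)$. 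Second, since $\mathcal K$ is convex along the finite-energy geodesic joining $u_0$ to any $v\in\mathcal H$ and $u_0$, being cscK, is a critical point of $\mathcal K$ (so the first-variation term at $u_0$ vanishes), one gets $\mathcal K\ge\mathcal K(u_0)$ on $\mathcal H$, and by density and lower semicontinuity on all of $\mathcal{E}^1$; in particular $\inf_{\mathcal{E}^1}\mathcal K=\mathcal K(u_0)<\infty$.

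\textbf{The crux: regularity of weak minimizers.} I would isolate as the principal theorem the statement that \emph{every finite-energy minimizer $u\in\mathcal{E}^1$ of $\mathcal K$ is the potential of a smooth cscK metric}, and prove it in three moves. (i) Finiteness of $\mathcal K(u)$ forces finite entropy $\Ent(\omega_u)=\int_X\log(\omega_u^n/\omega^n)\,\omega_u^n<\infty$, so the density of $\omega_u$ lies in $L\log L$. (ii) Minimality yields the Euler--Lagrange equation $S(\omega_u)=\underline S$ in a weak, pluripotential sense: test $\mathcal K$ against small smooth variations of $u$, and, since the entropy term is only lower semicontinuous and not differentiable, use the convexity of $\mathcal K$ to promote the resulting one-sided variational inequality to the equation. (iii) With the scalar curvature now pinned to the constant $\underline S$ and the entropy finite, run the Chen--Cheng a priori estimate scheme --- approximate $u$ by solutions of regularized cscK-type equations whose entropies stay uniformly bounded, deduce a uniform $C^0$ bound, then a Laplacian bound, then higher-order bounds, and pass to the limit to conclude $u\in C^\infty$ with $\omega_u$ cscK.

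\textbf{From regularity to coercivity.} Granting this, the set of minimizers of $\mathcal K$ on $\mathcal{E}^1$ is exactly the set of cscK potentials, and by uniqueness of cscK metrics modulo $G$ (Berman--Berndtsson) this set is precisely the single $d_1$-closed orbit $G\cdot u_0$. Now invoke the Darvas--Rubinstein criterion: a $G$-invariant, $d_1$-lower semicontinuous functional that is convex along finite-energy geodesics and whose minimizing set is one $G$-orbit is $G$-coercive. The mechanism is that a unit-speed finite-energy geodesic ray from $u_0$ along which $\mathcal K$ grew only sublinearly would, by convexity and $\mathcal K\ge\mathcal K(u_0)$, carry the constant value $\mathcal K(u_0)$, hence consist entirely of minimizers, hence lie inside $G\cdot u_0$; a compactness-modulo-$G$ argument then converts the absence of such sublinear rays transverse to the orbit into a uniform linear lower bound. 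This gives $d_1$-coercivity modulo $G$, and translating back through the Darvas--Rubinstein dictionary yields the stated inequality with Aubin's functional $J_\omega$.

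\textbf{Main obstacle.} Essentially the whole weight of the argument sits in the regularity step, and inside it in the passage from bare $\mathcal{E}^1$-minimality to smoothness: a priori the minimizer is only a closed positive current of finite energy, the cscK equation it satisfies is fourth order and holds only weakly, and the Chen--Cheng estimates presuppose a genuine smooth metric. The delicate task is to build a regularization along which the \emph{only} controllable quantity --- the entropy, which is the single term that the K-energy dominates --- remains uniformly bounded, so that those a priori estimates can be applied uniformly along the approximation and survive the limit.
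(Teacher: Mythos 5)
First, a point of scope: the paper does not prove this conjecture. It establishes only the forward direction (existence of a cscK metric implies the coercivity inequality, i.e.\ Theorem \ref{thm: TianCor1}), and explicitly states that the reverse direction ``seems to be out of reach for the moment''. Your dismissal of the converse as ``essentially Chen--Cheng'' appeals to work that postdates this paper; within the paper's own framework the converse reduces, via Theorem \ref{thm: DRthm}, to the unconditional regularity Conjecture \ref{conj: DRconj}, which is left open here. So the comparison can only be made on the forward direction.

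There your architecture (reduce to $d_1$-coercivity modulo $G$, show the $\mathcal E^1$-minimizers of $\mathcal K$ form a single smooth cscK $G$-orbit, invoke the Darvas--Rubinstein existence/properness principle) matches the paper, but your proof of the regularity step is not the paper's and has a genuine gap exactly at what you call the crux. Step (ii) --- extracting $S(\omega_u)=\bar S$ weakly from $\mathcal E^1$-minimality --- does not go through: $\mathcal E^1$ admits no two-sided smooth perturbations ($u+tv$ leaves ${\rm PSH}(X,\omega)$), the entropy term is only $d_1$-lsc and not differentiable, and convexity along finite-energy geodesics yields only one-sided inequalities whose boundary terms cannot be identified with the scalar curvature of a finite-energy current. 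Step (iii) is equally problematic: the a priori estimate scheme you invoke starts from \emph{smooth} solutions of twisted cscK equations, and constructing such approximants with uniformly bounded entropy converging to a given weak minimizer is precisely the hard analytic content that was missing at the time --- one cannot ``run'' fourth-order estimates on a current. The paper sidesteps all of this by proving only the conditional statement (Theorem \ref{thm: reguarity theorem}): assuming a smooth cscK potential $v$ exists, it runs a finite-energy continuity method (Propositions \ref{prop: weak_continuity_prop} and \ref{prop: AMtwist_minimizer}), minimizing $\mathcal K_{\lambda\omega_{v_j}}=\mathcal K+\frac{\lambda}{n}J_{\omega_{v_j}}$ for $\lambda>0$, using the Berman--Berndtsson first-order expansion at the smooth cscK point together with strict convexity of $\AM$ along nontrivial geodesics to identify the $\lambda\to 0$ limit as $g_j.v$, and then the quasi-triangle inequality for $\mathcal I$ plus Lemma \ref{lem: autlimit} to place an arbitrary weak minimizer on the orbit $G\cdot v$; no fourth-order PDE estimate is ever used. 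As written, the unconditional regularity you sketch in (ii)--(iii) is the open problem, not a proof.
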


This ``modified properness conjecture'' thus reduces to Tian's original prediction in case $G$ is trivial and was originally stated for Fano manifolds by Tian himself \cite{t4}. It was proved in this context (of Fano manifolds) in \cite[Theorem 2.4]{DR17}, and this paper also linked the resolution of the general conjecture to a regularity question on weak minimizers of the K-energy that we elaborate now. 

We denote by $(\mathcal E^1,d_1)$ the metric completion of $\Homega$ with respect to the $L^1$-type Mabuchi path length metric $d_1$. We refer to Sections 2.1-2.2 for more precise details about this metric structure introduced in \cite{da2}.  The point of connection with the questions investigated here is the fact that  $d_1$ metric growth is comparable to $J_\omega$ \cite[Proposition 5.5]{DR17},  and we refer to \cite[Section 4, Section 5]{DR17} for a more detailed exposition on how the $d_1$-metric geometry relates to J-properness.  
Let us now state the regularity conjecture of \cite{DR17} (see \cite[Conjecture 2.9]{DR17}) and the theorem that connects it to Conjecture \ref{conj: Tianmodified} above:

\begin{conjecture}[Conjecture 2.9 in \cite{DR17}] \label{conj: DRconj} Suppose $(X,\omega)$ is a compact K\"ahler manifold. The minimizers of the extended K-energy $\Kenergy: \mathcal E^1 \to (-\infty,+\infty]$ are smooth csck metrics.
\end{conjecture}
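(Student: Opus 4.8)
The plan is to prove the conjecture under the standing hypothesis of this paper, that $\mathcal H$ contains a cscK metric $\om_c=\om_{\phi_c}$; granting this, it suffices to show that any finite‑energy minimizer $u$ of $\mathcal K$ yields a \emph{smooth} cscK metric $\om_u$ --- which, by uniqueness of cscK metrics modulo $G$ (Berman--Berndtsson, extending Bando--Mabuchi), must then lie in the $G$-orbit of $\om_c$. Fix $u\in\mathcal E^1$ with $\mathcal K(u)=\inf_{\mathcal E^1}\mathcal K$. Since a cscK metric minimizes $\mathcal K$ over all of $\mathcal E^1$ (by geodesic convexity of $\mathcal K$ and the vanishing of its slope at $\phi_c$), we have $\inf_{\mathcal E^1}\mathcal K=\mathcal K(\phi_c)=\mathcal K(u)$; moreover, from the Chen--Tian decomposition $\mathcal K(\phi)=\Ent\big(\om_\phi^n\,|\,\om^n\big)+(\text{energy functionals})$, whose energy part is finite and $d_1$-continuous on $\mathcal E^1$, it follows that $\mathcal K(\phi)<\infty$ forces $\om_\phi^n$ to have finite entropy --- in particular $\om_u^n$ does. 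Next I would join $\phi_c$ to $u$ by the finite‑energy geodesic $[0,1]\ni t\mapsto u_t$, $u_0=\phi_c$, $u_1=u$, which exists and remains in $\mathcal E^1$. By convexity of the extended K-energy along such geodesics (Berman--Berndtsson's theorem and its $\mathcal E^1$-version), $t\mapsto\mathcal K(u_t)$ is convex; as it equals $\mathcal K(\phi_c)=\inf_{\mathcal E^1}\mathcal K$ at both endpoints while staying $\ge\inf_{\mathcal E^1}\mathcal K$, it is constant, hence affine.

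Everything now hinges on a rigidity statement for this equality case: affineness of $\mathcal K$ along a finite‑energy geodesic should force the geodesic to be an orbit of a one‑parameter subgroup of $G$. To prove it, I would approximate $(u_t)$ by the $\vep$-geodesics $u_t^\vep\searrow u_t$ (solutions of the $\vep$-perturbed homogeneous complex \MA equation on $X$ times a strip, $\vep\to0$), differentiate the Chen--Tian expression for $\mathcal K(u_t^\vep)$ twice in $t$, and derive a bound of the shape
\[
\frac{d^2}{dt^2}\,\mathcal K(u_t^\vep)\;\geq\;\int_X\big|\bar\partial\,\mathrm{grad}_{\om_{u_t^\vep}}\dot u_t^\vep\big|^2_{\om_{u_t^\vep}}\,\om_{u_t^\vep}^n-C\vep,
\]
the term $-C\vep$ accounting for the deficit of the $\vep$-geodesic from being an exact geodesic. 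Integrating over $t\in[0,1]$, using the affineness of $\mathcal K$ along the limiting geodesic together with a controlled passage to the limit of the entropy and energy contributions (for which the uniform entropy bound along $(u_t)$ from the previous paragraph and the $d_1$-boundedness of the segment are the key inputs), one obtains $\int_0^1\!\!\int_X\big|\bar\partial\,\mathrm{grad}_{\om_{u_t}}\dot u_t\big|^2_{\om_{u_t}}\,\om_{u_t}^n\,dt=0$, so that for a.e.\ $t$ the Hamiltonian $\dot u_t$ is a holomorphy potential. A softer argument then shows the underlying holomorphic vector field is independent of $t$, integrates it to a one‑parameter subgroup $(\rho_t)\subset G$, and identifies $\om_{u_t}=\rho_t^\star\om_c$; in particular $\om_u=\rho_1^\star\om_c$ is a smooth cscK metric. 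A second and, in places, cleaner way to finish --- once $u$ is known to have finite entropy --- is to extract from minimality that $u$ solves the constant scalar curvature equation $S(\om_u)=\bar S$ in the weak sense adapted to the Chen--Cheng theory, and then to bootstrap $u$ to a smooth solution by their a priori estimates.

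The genuine obstacle is the passage $\vep\to0$ in the second‑variation identity: the $\vep$-geodesics are only $C^{1,\bar 1}$, with estimates that degenerate as $\vep\to0$, and the entropy contribution $\int_X\log(\om_{u_t^\vep}^n/\om^n)\,\om_{u_t^\vep}^n$ and its $t$-derivatives must be controlled and shown to converge, even though entropy is merely lower semicontinuous. Carrying this out --- equivalently, proving that equality in the convexity of the extended K-energy rigidly forces the velocity to be a holomorphy potential (or, on the alternative route, producing the weak cscK equation and running elliptic regularity for finite‑entropy weak solutions) --- is where the real analytic work lies. The remaining ingredients are either soft ($d_1$-geometry, convexity of $\mathcal K$, existence of weak geodesics, the slope formula for $\mathcal K$) or quotable (the Chen--Cheng estimates, the Berman--Berndtsson uniqueness theorem).
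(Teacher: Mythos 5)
Your reduction to the standing hypothesis (existence of a smooth cscK potential $\phi_c$) is of course the right framing, since this conditional version is exactly what Theorem \ref{thm: reguarity theorem} proves. Steps 1--3 of your argument are sound: any cscK potential is a global minimizer of the extended $\mathcal K$ over $\mathcal E^1$ (a consequence of Berman--Berndtsson convexity plus the $d_1$-lsc extension of \cite{bdl}), and along the finite-energy geodesic from $\phi_c$ to the finite-energy minimizer $u$, convexity of $\mathcal K$ forces $\mathcal K$ to be constant.

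The gap is precisely where you flag it: the rigidity claim that constancy of $\mathcal K$ along a \emph{finite-energy} geodesic forces the geodesic to be an orbit of a one-parameter subgroup of $G$. This would be the equality case of the lsc-extended Berman--Berndtsson convexity theorem, and no such result is available. Controlling the second variation of the entropy term through the degenerate $\vep$-geodesic limit is out of reach; indeed even the \emph{first} variation of $\mathcal K$ along a finite-energy geodesic is only known to satisfy a one-sided inequality (Lemma \ref{lem:deriv av mab along singular} of the paper), which is already delicate to establish. The alternative route you sketch through Chen--Cheng a priori estimates is in fact how the full Conjecture \ref{conj: DRconj} was eventually resolved, but those estimates postdate this paper and are not quotable here.

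The paper's actual argument sidesteps K-energy rigidity entirely. Rather than working with $\mathcal K$ alone, it introduces the twisted functionals $\mathcal K_{\lambda\omega_u}=\mathcal K+\tfrac{\lambda}{n}J_{\omega_u}$, $\lambda>0$, which have \emph{unique} $\mathcal E^1$-minimizers $v^\lambda$ because the pure Monge--Amp\`ere energy $J_{\omega_u}$ is strictly convex in the relevant finite-energy sense (Proposition \ref{prop: weak_continuity_prop}). A Berman--Berndtsson implicit-function-theorem deformation, applied to $\tilde v^0+\lambda h$ with $\tilde v^0$ the optimally $G$-translated cscK potential, produces a derivative estimate of order $O(\lambda^2)$ for $\mathcal K_{\lambda\omega}$ along the geodesic joining $v^\lambda$ to $\tilde v^0+\lambda h$; splitting $\mathcal K_{\lambda\omega}=\mathcal K+\tfrac{\lambda}{n}J_\omega$ and using convexity of each summand separately, the information that survives as $\lambda\to 0$ is that $t\mapsto\AM_\omega(u_t)$ is \emph{linear} along the limiting geodesic (Proposition \ref{prop: AMtwist_minimizer}). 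The rigidity of \emph{that} statement --- linearity of $\AM_\omega$ along a finite-energy geodesic forces the endpoints to coincide, \cite[Theorem 4.12]{bdl} --- is available, because it concerns only Monge--Amp\`ere quantities and not entropy. The remaining steps (the $\mathcal I$-functional estimate, quasi-triangle inequality, and the compactness Lemma \ref{lem: autlimit} in $G$) then extract the automorphism identifying $u$ with $g.\phi_c$. In short, you have the right target, but the decisive move --- trading the intractable $\mathcal K$-rigidity for the tractable $\AM_\omega$-rigidity via the finite energy continuity path --- is absent from your proposal, and it is exactly that move that makes the proof go through.
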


\begin{theorem}[Theorem 2.10 in \cite{DR17}] \label{thm: DRthm} Conjecture \ref{conj: DRconj} implies Conjecture \ref{conj: Tianmodified}. 
\end{theorem}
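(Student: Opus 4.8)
The plan is to view Theorem~\ref{thm: DRthm} as the concrete instantiation, for the triple $(\mathcal{E}^1,d_1,G)$ and the extended K-energy $\mathcal{K}$, of the abstract properness principle for geodesically convex functionals on metric spaces, invoking Conjecture~\ref{conj: DRconj} only at the single point where it is needed: to guarantee that finite energy minimizers of $\mathcal{K}$ are smooth cscK metrics. First I would reduce Conjecture~\ref{conj: Tianmodified} to the metric statement
\[
\text{a cscK metric cohomologous to }\omega\text{ exists} \iff \mathcal{K}\text{ is }G\text{-coercive on }(\mathcal{E}^1,d_1),
\]
where $G$-coercivity means $\mathcal{K}(u)\geq \epsilon\, d_1(G.0,u)-C$ for some $\epsilon,C>0$, with $d_1(G.0,u):=\inf_{g\in G}d_1(0,g.u)$. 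This reduction is exactly the two-sided comparison of $d_1$-growth with $J_\omega$ of \cite[Proposition~5.5]{dr}, together with $G$-invariance of $\mathcal{K}$ and $d_1$ and the fact that $\mathcal{K}$ on $\mathcal{H}$ has the same infimum as its $d_1$-lower semicontinuous extension to $\mathcal{E}^1$. Throughout I would freely use the standard structural facts: $\mathcal{K}$ is $G$-invariant, $d_1$-lower semicontinuous, and convex along finite energy Mabuchi geodesics; smooth cscK potentials minimize $\mathcal{K}$ on $\mathcal{E}^1$; and, when they exist, they form a single $G$-orbit (uniqueness of cscK metrics modulo automorphisms, Berman--Berndtsson).

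For the implication ``$G$-coercivity $\Rightarrow$ existence'' I would run the direct method. Take $u_j\in\mathcal{H}$ with $\mathcal{K}(u_j)\to\inf_{\mathcal{H}}\mathcal{K}=\inf_{\mathcal{E}^1}\mathcal{K}=:m$, which is finite by coercivity. Coercivity bounds $d_1(G.0,u_j)$, so choosing $g_j\in G$ nearly realizing that infimum and replacing $u_j$ by $g_j.u_j$ (legitimate, since $\mathcal{K}$ and $d_1$ are $G$-invariant) we may assume $d_1(0,u_j)\leq R$ for all $j$. Writing $\mathcal{K}$ via the Chen--Tian formula as entropy plus energy terms controlled by $d_1(0,\cdot)$, the $u_j$ then have uniformly bounded entropy and energy, hence lie in a $d_1$-precompact set by the compactness theorem for such sublevel sets; a subsequence converges in $(\mathcal{E}^1,d_1)$ to some $u_\infty$, and $d_1$-lower semicontinuity gives $\mathcal{K}(u_\infty)=m$. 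Thus $u_\infty$ minimizes $\mathcal{K}$, and by Conjecture~\ref{conj: DRconj} it is a smooth cscK potential.

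For the converse, suppose a cscK potential $u_0$ exists, so $m=\inf_{\mathcal{E}^1}\mathcal{K}=\mathcal{K}(u_0)>-\infty$. If $\mathcal{K}$ were not $G$-coercive, then negating coercivity and using $\mathcal{K}\geq m$ (a small diagonal argument over shrinking slopes) produces $u_j\in\mathcal{H}$ with $T_j:=d_1(G.0,u_j)\to\infty$ and $\mathcal{K}(u_j)/T_j\to 0$. Applying $g_j\in G$ nearly realizing $d_1(G.0,u_j)$, set $v_j=g_j.u_j$; since the orbit distances $d_1(G.0,\cdot)$ and $d_1(G.u_0,\cdot)$ differ by at most the constant $d_1(0,u_0)$, one has $d_1(u_0,v_j)=:L_j=T_j+O(1)\to\infty$ and $d_1(G.u_0,v_j)=T_j+O(1)$. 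Let $\gamma_j\colon[0,L_j]\to\mathcal{E}^1$ be the unit-speed finite energy geodesic from $u_0$ to $v_j$. Convexity of $\mathcal{K}$ along $\gamma_j$ gives $\mathcal{K}(\gamma_j(t))\leq \tfrac{L_j-t}{L_j}m+\tfrac{t}{L_j}\mathcal{K}(u_j)$, which for each fixed $t$ tends to $m$; since $\mathcal{K}\geq m$ we get $\mathcal{K}(\gamma_j(t))\to m$ for every fixed $t$. Meanwhile $d_1(G.u_0,\gamma_j(t))\geq d_1(G.u_0,v_j)-(L_j-t)\geq t-O(1)$ by the triangle inequality. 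Extracting a subsequential limit, locally uniform in $t$, yields a unit-speed geodesic ray $\gamma\colon[0,\infty)\to\mathcal{E}^1$ with $\gamma(0)=u_0$, $d_1(G.u_0,\gamma(t))\geq t-O(1)$, and $\mathcal{K}(\gamma(t))\leq\liminf_j\mathcal{K}(\gamma_j(t))=m$ by lower semicontinuity, hence $\mathcal{K}(\gamma(t))=m$ for all $t$. So every $\gamma(t)$ minimizes $\mathcal{K}$; by Conjecture~\ref{conj: DRconj} each $\gamma(t)$ is a smooth cscK potential, and by uniqueness modulo $G$ this forces $\gamma(t)\in G.u_0$, i.e.\ $d_1(G.u_0,\gamma(t))=0$ — contradicting $d_1(G.u_0,\gamma(t))\geq t-O(1)$ for large $t$. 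Hence $\mathcal{K}$ is $G$-coercive, which is Conjecture~\ref{conj: Tianmodified}.

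The step I expect to be the main obstacle is the extraction of the limit geodesic ray $\gamma$ from the segments $\gamma_j$: closed $d_1$-balls in $\mathcal{E}^1$ are not compact, so this cannot be done by naive compactness and must instead exploit the pluripotential-theoretic description of finite energy Mabuchi geodesics (as envelopes of subgeodesics, with uniform control of the restrictions $\gamma_j|_{[0,t]}$), together with $d_1$-lower semicontinuity of $\mathcal{K}$ passing to such limits; the entropy/energy compactness used in the direct method is the other genuinely analytic input. Everything else is soft: the role of Conjecture~\ref{conj: DRconj} is precisely to promote an a priori merely finite energy minimizer of $\mathcal{K}$ to a smooth cscK metric, so that the Berman--Berndtsson convexity and uniqueness theory — which only speaks about honestly regular metrics — becomes applicable and closes both directions.
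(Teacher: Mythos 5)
The paper does not reproduce a proof of this theorem---it attributes it to \cite{dr}, whose abstract properness framework (built around $G$-invariance, geodesic convexity, $d_1$-lower semicontinuity, compactness of entropy/energy sublevel sets, and the single-$G$-orbit property of minimizers) specializes to precisely the two-direction argument you outline. Your translation between $\inf_{g\in G}J_\omega(g.u)$ and $d_1(G.0,u)$ via \cite[Proposition 5.5]{dr}, the direct method for ``coercivity $\Rightarrow$ existence,'' and the convexity/contradiction scheme for ``existence $\Rightarrow$ coercivity'' are all the right moves, and Conjecture~\ref{conj: DRconj} is invoked at exactly the two points where a weak minimizer must be upgraded to a smooth cscK potential so that \cite{bb} applies. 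So the route matches the one the paper relies on.

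The one place you misdiagnose the difficulty is the final paragraph. The extraction of a limit geodesic ray is not an obstacle, and the fix you gesture at (envelope descriptions of finite energy geodesics with ``uniform control of restrictions'') is not what is needed; the same \cite[Theorem 2.17]{bbegz} entropy compactness you already invoke in the direct method handles $\gamma_j(T)$ for each fixed $T$. Indeed, convexity of $\mathcal K$ along $\gamma_j$ gives $\mathcal K(\gamma_j(T))\leq(1-T/L_j)m+(T/L_j)\mathcal K(v_j)\to m$, while $d_1(0,\gamma_j(T))\leq d_1(0,u_0)+T$ is bounded, so the Chen--Tian formula bounds $\Ent(\omega^n,\omega^n_{\gamma_j(T)})$ and the family $\{\gamma_j(T)\}_j$ is $d_1$-precompact. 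Better still, you need no ray at all: fix one $T$ larger than your $O(1)$ constant, extract a $d_1$-subsequential limit $w_T$, use lsc to get $\mathcal K(w_T)=m$, Conjecture~\ref{conj: DRconj} and \cite{bb} to get $w_T\in G.u_0$, and the $1$-Lipschitz continuity of $u\mapsto d_1(G.u_0,u)$ to pass the lower bound $d_1(G.u_0,\gamma_j(T))\geq T-O(1)$ to the limit, which is already the contradiction. (A diagonal argument over $T\in\mathbb Q_{\geq 0}$ would give the ray if you wanted it, but it is unnecessary.) One smaller gap to record: the $G$-invariance of $\mathcal K$, which you use freely to replace $u_j$ by $g_j.u_j$, requires the Futaki character to vanish. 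This does hold in both directions (coercivity forces $\mathcal K$ bounded below, and a cscK metric forces the Futaki invariant to vanish), but it should be stated rather than listed as a standing ``structural fact.''
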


Our first main result partially confirms Conjecture \ref{conj: DRconj} and also a less general conjecture of X.X. Chen \cite[Conjecture 6.3]{c5}:

\begin{theorem} \label{thm: reguarity theorem} Suppose $(X,\omega)$ is a csck manifold. If $v \in \mathcal E^1$ minimizes the extended K-energy $\Kenergy: \mathcal E^1 \to (-\infty,+\infty]$, then $v$ is a smooth csck potential. In particular there exists $g \in G$ such that $g^* \omega_v = \omega$. 
\end{theorem}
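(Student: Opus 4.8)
The plan is to promote an arbitrary finite energy minimizer $u$ to a smooth cscK metric in four steps: show that $u$ has finite entropy; read off from minimality a weak form of the cscK equation; bootstrap that weak equation to smoothness; and conclude via uniqueness of cscK metrics. For the first step, since $\omega$ is cscK it minimizes $\mathcal{K}$ over $\mathcal{E}^1$ (Berman--Berndtsson), so $\mathcal{K}$ is bounded below and $u$ is a genuine minimizer with $\mathcal{K}(u)\le\mathcal{K}(0)<\infty$. Plugging this into the entropy-plus-energy decomposition $\mathcal{K}(v)=\Ent(\omega_v\mid\omega)+E(v)$, where $E$ collects lower order, $d_1$-continuous mixed \MA energy terms (and is in particular finite on all of $\mathcal{E}^1$), forces $\Ent(\omega_u\mid\omega)<\infty$; thus $\omega_u^n=e^{F}\omega^n$ with $F e^{F}\in L^1(\omega^n)$.

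For the second step, take any competitor $v\in\mathcal{E}^1$ of finite entropy and let $t\mapsto u_t$, $t\in[0,1]$, be the finite energy geodesic joining $u_0=u$ to $u_1=v$. Convexity of the extended K-energy along finite energy geodesics (Berman--Berndtsson) makes $t\mapsto\mathcal{K}(u_t)$ convex, and minimality of $u$ yields $\frac{d}{dt}\big|_{0^+}\mathcal{K}(u_t)\ge0$. Expanding the right derivative via the first-variation formulas for $\Ent(\cdot\mid\omega)$ and for $E$ along geodesics, this inequality says that a signed measure canonically attached to $u$ --- namely $(\underline S-S(\omega_u))\,\omega_u^n$, where $S(\omega_u)$ is read off from $\omega_u^n=e^{F}\omega^n$ through $\Delta_{\omega_u}F=\mathrm{tr}_{\omega_u}\Ric\,\omega-\underline S$ --- pairs nonnegatively with $\dot u_{0^+}$; letting $v$ range over a sufficiently large family pins this measure to zero, so $u$ is a weak (pluripotential) solution of the decoupled cscK system $\omega_u^n=e^{F}\omega^n$, $\Delta_{\omega_u}F=\mathrm{tr}_{\omega_u}\Ric\,\omega-\underline S$. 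I expect this step to be the main obstacle: the entropy is merely lower semicontinuous on $\mathcal{E}^1$ and not visibly differentiable, so converting the one-sided inequality into a genuine Euler--Lagrange identity requires a delicate choice of approximating competitors and a precise weak formulation of the fourth-order object $S(\omega_u)$.

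For the third step, a weak solution of the cscK system with an entropy bound is exactly the hypothesis under which the a priori estimates of Chen--Cheng operate; since those are established for smooth solutions, I would deploy them along a regularization --- solving the (if needed, slightly twisted) system on smooth data $u_j\searrow u$, harvesting uniform $C^\infty$ bounds from the uniform entropy bound, and passing to the limit, the limit being identified with $u$ by uniqueness of solutions to the complex \MA equation within $\mathcal{E}^1$ --- to obtain $u\in C^\infty$ and $S(\omega_u)=\underline S$. Finally, $\omega_u$ is then a smooth cscK metric cohomologous to the smooth cscK metric $\omega$, so the uniqueness of cscK metrics modulo $\textup{Aut}_0(X,J)$ (again Berman--Berndtsson) furnishes $g\in G$ with $g^*\omega_u=\omega$, completing the proof.
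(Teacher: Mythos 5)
Your proposal takes a genuinely different route from the paper, and each of its central steps contains a gap that the paper's argument is specifically designed to avoid.

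Step 1 (finite entropy) is fine and does appear implicitly in the paper's machinery. The trouble begins at Step 2, and you have yourself put your finger on it: passing from the one-sided inequality $\frac{d}{dt}\big|_{0^+}\mathcal{K}(u_t)\ge 0$ to a weak Euler--Lagrange equation for the fourth-order cscK operator is a notoriously delicate matter, because the entropy term is only $d_1$-lower-semicontinuous and has no usable two-sided derivative along finite energy geodesics, while the scalar curvature $S(\omega_u)$ of a merely finite-energy minimizer $u$ is not even well-defined as a measure without further regularity. No amount of ``sufficiently large family of competitors'' clearly resolves this, and the paper never attempts to. Step 3 compounds the problem: the Chen--Cheng a priori estimates postdate this paper and cannot be cited here; and even setting aside chronology, your regularization scheme (``solving the (if needed, slightly twisted) system on smooth data $u_j\searrow u$'') is circular in spirit, since producing smooth solutions to the cscK or twisted cscK equation near $u$ is itself an existence problem requiring a continuity method and a priori estimates, not a consequence of $u_j\searrow u$.

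The paper's proof is a ``soft'' argument that never writes a weak cscK equation and never performs elliptic bootstrap on a singular potential. It runs a \emph{finite energy continuity method}: for $\lambda>0$ one considers the twisted functional $\mathcal{K}_{\lambda\omega_{v_j}}=\mathcal K+\frac{\lambda}{n}J_{\omega_{v_j}}$, where $v_j\in\mathcal{H}_\omega\cap\AM^{-1}(0)$ is a smooth potential $d_1$-approximating the given minimizer $v\in\mathcal{M}^1$. Proposition \ref{prop: weak_continuity_prop} produces a unique $\mathcal{E}^1$-minimizer $v_j^\lambda$ together with the crucial uniform bound $\mathcal{I}(v_j^\lambda,v_j)\leq n(n+1)\mathcal{I}(v_j,v)$, and Proposition \ref{prop: AMtwist_minimizer} — the real heart, built on Berman--Berndtsson's geodesic convexity arguments and a one-sided derivative estimate along a geodesic with one smooth endpoint — shows that $v_j^\lambda\to g_j.u$ in $d_1$ as $\lambda\to0$ for some $g_j\in G$, where $u$ is the given smooth cscK potential. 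Combining this with the $\mathcal{I}$-bound, the quasi-triangle inequality, and Lemma \ref{lem: autlimit} (compactness of the relevant $G$-orbits, using reductivity) gives $v=g.u$ directly. In short, the paper trades the ``write and bootstrap a weak PDE'' strategy you propose — whose two main steps are exactly the hard ones — for a compactness-plus-convexity argument that lands directly on an automorphism image of the already-smooth cscK metric, so that smoothness of $v$ is inherited rather than proved.
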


The last claim follows from the uniqueness result of \cite{bb}. Using this result and Theorem \ref{thm: DRthm} we immediately obtain one direction of Conjecture \ref{conj: Tianmodified}:

\begin{theorem} \label{thm: TianCor1} Suppose $(X,\omega)$ is a csck manifold. Then for some $C,D>0$ we have 
\begin{equation}\label{eq: K_is_J_Gproper}
\Kenergy(u) \geq C \inf_{g \in G}J_\omega(g.u) - D, \ \ u \in \Homega.
\end{equation}
\end{theorem}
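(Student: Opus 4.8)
The plan is to combine the regularity result (Theorem \ref{thm: reguarity theorem}) with the implication recorded in Theorem \ref{thm: DRthm}. Theorem \ref{thm: reguarity theorem} asserts exactly the content of Conjecture \ref{conj: DRconj} under the standing hypothesis that $(X,\omega)$ is cscK: every $d_1$-finite energy minimizer of the extended K-energy $\mathcal K\colon\mathcal E^1\to(-\infty,+\infty]$ is a smooth cscK potential. Hence Conjecture \ref{conj: DRconj} holds in our setting, and by Theorem \ref{thm: DRthm} (the implication ``Conjecture \ref{conj: DRconj} $\Rightarrow$ Conjecture \ref{conj: Tianmodified}'') the conclusion of Conjecture \ref{conj: Tianmodified} holds: there is a cscK metric cohomologous to $\omega$ (namely $\omega$ itself) if and only if there are constants $C,D>0$ with $\mathcal K(u)\geq C\inf_{g\in G}J_\omega(g.u)-D$ for all $u\in\mathcal H$. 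Since the ``only if'' direction of that equivalence is precisely \eqref{eq: K_is_J_Gproper}, and its hypothesis is met by assumption, we are done.

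A subtlety to check is that Theorem \ref{thm: DRthm} is stated as an implication between two conjectures, each of which is an equivalence, whereas what we want is only one direction of the resulting equivalence. This is not a real obstacle: the proof of Theorem \ref{thm: DRthm} in \cite{dr} establishes, from the regularity statement, that \emph{if} a cscK metric exists in $\mathcal H$ \emph{then} the $G$-coercivity estimate holds; the reverse direction of Conjecture \ref{conj: Tianmodified} was already known unconditionally (it follows from the fact that $G$-coercivity forces $\mathcal K$ to attain its infimum on $\mathcal E^1$, together with Theorem \ref{thm: reguarity theorem} itself, or more classically via properness arguments). So invoking Theorem \ref{thm: DRthm} together with Theorem \ref{thm: reguarity theorem} yields \eqref{eq: K_is_J_Gproper} directly, with no circularity.

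If one prefers a self-contained route rather than black-boxing Theorem \ref{thm: DRthm}, the argument runs as follows. Suppose \eqref{eq: K_is_J_Gproper} fails; then there is a sequence $u_j\in\mathcal H$ with $\mathcal K(u_j)+D_j\leq \frac1j\inf_{g\in G}J_\omega(g.u_j)$ for suitable $D_j\to\infty$, or more efficiently one shows the failure of $G$-coercivity produces a $d_1$-geodesic ray along which $\mathcal K$ is non-increasing with slope zero. Using that $d_1$-growth is comparable to $\inf_{g\in G}J_\omega(g.\cdot)$ modulo the $G$-action \cite[Proposition 5.5]{dr}, one normalizes by the $G$-action and extracts, via the compactness of $d_1$-bounded sublevel sets of $\mathcal K$ and the lower semicontinuity of $\mathcal K$ on $(\mathcal E^1,d_1)$, either a finite-energy minimizer of $\mathcal K$ (contradicting that $\omega$ is the unique minimizer up to $G$ by \cite{bb}, after the normalization has been arranged to move mass away from the $G$-orbit of $0$) or a calibrated geodesic ray of zero $\mathcal K$-slope emanating from $0$, which by the Hamilton--Tian type analysis in \cite{dr} again forces the existence of a nontrivial holomorphic vector field degenerating the ray — ultimately contradicting the normalization. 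The main obstacle in this longer route is the compactness/lower-semicontinuity input and the careful handling of the $G$-action to rule out the trivial way coercivity can fail; but since all of this is already packaged into Theorem \ref{thm: DRthm}, the clean proof is simply: apply Theorem \ref{thm: reguarity theorem} to verify the hypothesis of Theorem \ref{thm: DRthm}, and read off \eqref{eq: K_is_J_Gproper}.
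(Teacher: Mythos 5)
Your main argument is exactly the paper's: apply Theorem~\ref{thm: reguarity theorem} to verify the regularity hypothesis, then extract \eqref{eq: K_is_J_Gproper} from Theorem~\ref{thm: DRthm}. You are also right to flag, and to resolve, the logical subtlety that Theorem~\ref{thm: reguarity theorem} only establishes Conjecture~\ref{conj: DRconj} under the standing cscK hypothesis, since the proof of Theorem~\ref{thm: DRthm} in \cite{dr} only invokes regularity of minimizers in the ``existence implies $G$-coercivity'' direction, which is all that is needed here. One caveat: your parenthetical claim that the reverse direction of Conjecture~\ref{conj: Tianmodified} (coercivity implies existence of a cscK) ``was already known unconditionally'' is false --- the paper explicitly notes that this direction remains out of reach for general polarizations --- and the justification you offer (invoking Theorem~\ref{thm: reguarity theorem}, which presupposes a cscK) would be circular. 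This error is harmless, though, since your proof only uses the forward direction.
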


The proof of Theorem \ref{thm: reguarity theorem} relies on the $L^1$-Mabuchi geometry of $\Homega$ introduced in \cite{da1,da2}, the finite energy pluripotential theory of \cite{bbegz,GZ07} and the convexity methods of \cite{DR17,bdl} and \cite{bb}. Realizing that the metric geometry of $\Homega$ and J-properness should be related seems to have first appeared in \cite[Conjecture 6.1]{c5}, but this work rather proposed the use of the $L^2$-Mabuchi metric on $\Homega$.

As a consequence of Theorem \ref{thm: TianCor1} and the techniques of \cite{t2,b} we obtain a result on K-polystability, originally proved by Mabuchi (\cite[Main Theorem]{ma2} see also \cite{ma1}), using a completely different argument. Slightly less  general, or different flavor results were obtained by Stoppa, Stoppa-Sz\'ekelyhidi, Sz\'ekelyhidi \cite[Theorem 1.2]{sto}, \cite[Theorem 1.4]{stosz}, \cite[Theorem A]{sz} and others. We recall the relevant terminology in the last section of the paper.

\begin{theorem} \label{thm: Kpolystable}Suppose $L \to X$ is a positive line bundle. If there exists a csck metric in the class  $c_1(L)$, then $(X,L)$ is K-polystable.
\end{theorem}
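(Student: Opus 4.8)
The plan is to deduce K-polystability from the coercivity estimate \eqref{eq: K_is_J_Gproper} of Theorem \ref{thm: TianCor1}, following the strategy that relates analytic coercivity of the K-energy to positivity of the Donaldson--Futaki invariant on test configurations. Concretely, suppose $(\mathcal X, \mathcal L) \to \mathbb{C}$ is a (normal, relatively ample) test configuration for $(X, L)$ with $\mathbb{C}^*$-action. First I would attach to it a geodesic ray $(u_t)_{t \geq 0}$ in $\mathcal E^1$ emanating from a smooth potential in $\mathcal H$: the $\mathbb{C}^*$-action produces a weak subgeodesic (the metric on $\mathcal L$ over the unit disk restricted to fibers), and after normalization this is a genuine $d_1$-geodesic ray by the pluripotential-theoretic construction of Phong--Sturm and its refinements (cf. \cite{bbegz}). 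The key analytic input is the slope formula: the asymptotic slope of the K-energy along this ray computes (up to the entropy/non-Archimedean correction) the Donaldson--Futaki invariant, $\lim_{t\to\infty} \mathcal K(u_t)/t = \mathrm{DF}(\mathcal X, \mathcal L)$ when $\mathcal X$ has mild singularities, and in general $\mathrm{DF} \geq \lim \mathcal K(u_t)/t$; similarly the slope of $J_\omega$ along the ray is the ``non-Archimedean $J$'' functional, which is $\geq 0$ and vanishes exactly when the test configuration is trivial (equivalently, a product). This is where I would cite \cite{t2,b}.

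The second step is to feed the coercivity inequality into these slope asymptotics. Evaluating \eqref{eq: K_is_J_Gproper} along $u_t$, dividing by $t$ and letting $t \to \infty$ gives
\begin{equation*}
\mathrm{DF}(\mathcal X, \mathcal L) \geq \lim_{t\to\infty}\frac{\mathcal K(u_t)}{t} \geq C \lim_{t\to\infty}\frac{\inf_{g\in G} J_\omega(g.u_t)}{t} \geq 0,
\end{equation*}
which already yields K-semistability. To upgrade to K-polystability one must show that $\mathrm{DF}(\mathcal X, \mathcal L) = 0$ forces $(\mathcal X, \mathcal L)$ to be a product configuration. If $\mathrm{DF} = 0$, the chain of inequalities forces the $J$-slope of the ray to be zero; one then invokes the fact (again from the non-Archimedean picture, or directly from the $d_1$-geometry) that a $d_1$-geodesic ray with vanishing $J$-slope is a constant ray, up to the action of a one-parameter subgroup of $G$. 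Equivalently, vanishing of the non-Archimedean $J$ implies the test configuration is obtained from a holomorphic vector field, hence is a product. This matches the definition of K-polystability: $\mathrm{DF} \geq 0$ always, with equality only for product configurations.

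The main obstacle I anticipate is the slope formula and the identification of the zero locus — i.e. making rigorous that (i) a general test configuration gives rise to a finite-energy geodesic ray whose K-energy and $J$ slopes are controlled by, respectively, $\mathrm{DF}$ and the non-Archimedean $J$-energy, and (ii) that vanishing $J$-slope characterizes products. Part (i) requires the semicontinuity/slope results that bound the asymptotic K-energy by the Donaldson--Futaki invariant for possibly singular $\mathcal X$; here I would lean on \cite{bbegz} for the finite-energy pluripotential side and on \cite{t2,b} for the slope computation, noting that one only needs the inequality $\mathrm{DF} \geq \limsup \mathcal K(u_t)/t$ (not equality) for the semistability direction. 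Part (ii) is the genuinely delicate ``equality case'': it uses that $G = \mathrm{Aut}_0(X,J)$ and that the infimum over $G$ in \eqref{eq: K_is_J_Gproper} exactly kills the directions along which $J$ fails to be proper, so a ray on which $\inf_{g} J_\omega(g.u_t)$ grows sublinearly must be, asymptotically, tangent to the $G$-orbit — and then a test configuration matching such a ray is conjugate to one driven by a one-parameter subgroup of $G$, i.e. a product. Everything else (normalizing to $\AM^{-1}(0)$, comparability of $d_1$-growth with $J_\omega$ via \cite[Proposition 5.5]{dr}) is routine bookkeeping.
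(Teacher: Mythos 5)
Your semistability half is essentially the paper's argument: the slope formula \eqref{eq: Tian_BHJ} gives $\mathcal K(u^{h^{FS},\Bbb C^*}_{\tau}) = -(DF(\mathcal X,\mathcal L) - a(\mathcal X,\mathcal L))\log|\tau|^2 + O(1)$ with $a(\mathcal X,\mathcal L)\geq 0$, and the lower bound on $\mathcal K$ supplied by Theorem \ref{thm: TianCor1} forces $DF \geq a \geq 0$; you do not even need to divide by $t$ or invoke any $J$-slope for this half. The gaps are all in the equality case. First, your claim that the non-Archimedean $J$ ``vanishes exactly when the test configuration is trivial (equivalently, a product)'' conflates two different notions: trivial and product are not equivalent, and that distinction is the entire content of polystability versus uniform stability. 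A product configuration driven by a nontrivial one-parameter subgroup of $G$ has strictly positive $J$-slope; what is controlled along the ray when $DF=0$ is the reduced functional $\inf_{g\in G}J_\omega(g.u_t)$, not $J_\omega(u_t)$. Second, dividing the coercivity inequality by $t$ only yields sublinear growth of $\inf_g J_\omega(g.u_t)$, whereas the identification of the ray with a $G$-orbit (Lemma \ref{lem: JG_bounded_ray}) requires a uniform bound; the correct route is to note that $DF=0$ and the lower bound on $\mathcal K$ force $a(\mathcal X,\mathcal L)=0$ and hence $\mathcal K$ bounded along the ray, so coercivity gives $\inf_g J_\omega$ uniformly bounded with no division by $t$. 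One also needs the two-sided comparison \eqref{eq: geodtestconf_ineq} to transfer this bound from the subgeodesic $u^{h^{FS},\Bbb C^*}$ to the genuine Phong--Sturm geodesic ray $\phi_t$.

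The more serious gap is the bridge from ``the geodesic ray is generated by a holomorphic vector field $V$ on $X$'' to ``$\mathcal X$ is a product''. You assert that a test configuration matching such a ray is conjugate to one driven by a one-parameter subgroup of $G$, but this is precisely what must be proved, and it fails without further input: distinct test configurations can induce the same ray (non-reduced central fibers being the basic obstruction). The paper's route is: (i) Lemma \ref{lem: JG_bounded_ray} identifies the ray as $\exp_I(tJV).\phi_0$ for a Hamiltonian Killing field $V$; (ii) Lemma \ref{lem: vectorlift} lifts $V$ to the line bundle $L$ so that its flow acts on the metric $h_1e^{-\phi_0}$; (iii) $DF=0$ forces $a(\mathcal X,\mathcal L)=0$, i.e.\ the central fiber $X_0$ is reduced; and (iv) the normal-family argument of Proposition \ref{prop: Berman_prop}, comparing $\rho_t$ with the flow of the lift via \eqref{eq: geodtestconf_ineq}, produces an isomorphism $X_1\to X_0$ and hence $\mathcal X\cong X\times\Bbb C$. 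Steps (ii)--(iv), and in particular the role of reducedness of $X_0$, are absent from your proposal, and without them the equality case does not close.
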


The idea of proving K-stability via properness goes back to Tian's seminal paper \cite{t2}.  The main point of our approach, involving geodesic rays, is to generalize the findings of \cite{b} from the Fano case.

In case the group $G$ is trivial,  the results in \cite{bbj,bhj,DeR} show that properness  implies uniform K-stability in the $L^1$-sense (for terminology, see \cite{bbj,bhj,DeR} and references therein). Thus, as a consequence of Theorem \ref{thm: TianCor1} we obtain the following:
 
\begin{corollary} Assume that $(X,L)$ is a positive line bundle and $G$ is trivial. If there exists a csck metric in $c_1(L)$, then $(X,L)$ is uniformly K-stable. 
\end{corollary}

\paragraph*{Further relations to previous results.} We end the introduction with a brief (but by no means complete) discussion about further relations to previous results. Much work has been done on Tian's properness conjectures in the case when the K\"ahler class is anti-canonical and we refer to \cite{DR17} for a detailed historical account. To our knowledge, in the case of csck metrics, excluding perhaps the particular case of toric K\"ahler manifolds, no partial results are known, even when $G$ is trivial. 

Conjecture \ref{conj: DRconj} is known to be true in case $(X,\omega)$ is Fano \cite{be2}. Proving the reverse direction of Conjecture \ref{conj: Tianmodified} via Theorem \ref{thm: DRthm} seems to require further progress on the theory of fourth order partial differential equations and seems to be out of reach for the moment. 

Using the ideas of \cite{bb,DR17}, it is likely that different versions of the above  properness theorem can be obtained assuming existence of extremal or soliton/edge type csck metrics, but also for different spaces of potentials, mimicking \cite[Theorem 2.1, Theorem 2.11, Theorem 2.12]{DR17}. We refer to \cite[Remark 4.14]{bdl} for a result on twisted csck metrics.

Our K-stability results fit into a circle of ideas surrounding the Yau--Tian--Donaldson conjecture on a polarized manifold $(X,L)$, saying that the first Chern class of $L$ contains a K\"ahler metric with constant scalar curvature if and only if $(X,L)$ is stable in an appropriate sense, inspired by Geometric Invariant Theory. In the formulation introduced by Donaldson \cite{do1} the stability in question was formulated as K-polystability, but in view of an example in \cite{acgt} there is widespread belief that the notion of K-(poly)stability has to be strengthened (unless $X$ is Fano and $L$ is the anti-canonical polarization). In case $G$ is trivial, uniform K-stability was introduced in the thesis of Sz\'ekelyhidi (see also \cite{sz,Der,bhj0}) to provide such a stronger notion. In light of the recent variational approach to the Yau--Tian--Donaldson conjecture introduced in \cite{bbj} it seems that one of the main analytic hurdles in proving that uniform K-stability conversely implies the existence of a constant scalar curvature metric is the general form of the regularity conjecture alluded to above. 
Finally, it seems likely that our proof of K-polystability can be extended to the transcendental setting considered very recently in \cite{SD,DeR} but we will not go further into this here. In the case when $G$ is trivial, properness does imply uniform K-stability also in the transcendtal setting, as shown in \cite{DeR}.

\paragraph{Acknowledgments.} The first named author was supported by the Swedish Research Council, the European Research Council and 
the Knut and Alice Wallenberg foundation.  The second named author has been partially supported by BSF grant 2012236 and NSF grant DMS-1610202. 

The first named author is grateful to S\'ebastien Boucksom and Mattias Jonsson for discussions and for sharing the preprint \cite{bhj}. The second named author would 
like to thank L. Lempert, Y. Rubinstein for enlightening discussions on the topic of the paper and for making suggestions on how to improve the paper. The second named author would also like to thank G. Tian for explaining him how the techniques of \cite{t2} can be used to give another proof of Theorem \ref{thm: Kpolystable}.
Lastly, we would like to thank the anonymous referee for suggesting numerous changes that greatly improved the presentation of the paper.

\section{Preliminaries} 
We recall several known results that are needed in the present paper.  We refer the reader to \cite{da1,da2},  \cite[Section 2]{bdl} and \cite[Section 5.1]{DR17} for more information. Below we will follow the notations of \cite{DR17}.   

Fix a compact connected K\"ahler manifold $(X,\omega)$  of dimension $n$. A function $u: X \rightarrow \mathbb{R}\cup \{-\infty\}$ is called quasi-plurisubharmonic if locally $u= \rho + \varphi$, where $\rho$ is smooth and $\varphi$ is a plurisubharmonic function. We say that $u$ is $\omega$-plurisubharmonic ($\omega$-psh for short) if it is quasi-plurisubharmonic and $\omega_u:=\omega+i\ddbar u \geq 0$ in the weak sense of currents on $X$. We let $\PSH(X,\omega)$ denote the space of all $\omega$-psh functions on $X$.
Clearly $\mathcal H_\omega \subset \textup{PSH}(X,\omega)$, and this latter space also hosts metric completions of $\mathcal H_\omega$. We recall this below, along with standard terminology and results from finite energy pluripotential theory, that will be essential for the rest of this paper.

\subsection{The finite energy space $\mathcal{E}^1$}
In our analysis below we will mainly work with singular potentials. 
For  bounded $\omega$-psh functions $u_1,...,u_n$ the mixed Monge-Amp\`ere measures $\omega_{u_1} \wedge... \wedge \omega_{u_n}$ were introduced by Bedford-Taylor \cite{BT76,BT82}, generalizing the usual wedge product of smooth forms. As observed in \cite[Secton 1.1]{GZ07}, for an $n$-tuple of (possibly unbounded) $\omega$-psh functions $u_1,...,u_n$   the sequence of measures 
\[
\mathbbm{1}_{\bigcap_{k=1}^n\{u_k>-j\}} \omega_{\max(u_1,-j)} \wedge \cdots \wedge \omega_{\max(u_n,-j)}
\]
is non-decreasing in $j$, and we set $\omega_{u_1} \wedge \cdots \wedge  \omega_{u_n}$ to be the ``strong limit'' of these measures (for more details we refer to \cite[Section 1.1]{GZ07}). When $u_1=\cdots =u_n=u$ we simply set $\omega_u^n:= \omega_u \wedge \cdots \wedge \omega_u$.  By construction, $\omega_u^n$ is a positive Borel measure on $X$ whose mass can take any value in $[0,V]$, where $V=\int_X \omega^n$. The class $\Ec$  consists of functions $u\in \PSH(X,\omega)$ such that $\omega_u^n$ has full mass, i.e. $\int_X \omega_u^n=V$.  The class $\Ec^1$ consists of functions $u\in \Ec$ such that $\int_X |u| \, \omega_u^n<+\infty$. 
We refer the readers to \cite{GZ07} for a detailed study of this finite energy class. 

\subsection{The geodesic metric space $(\Ec^1,d_1)$}
Given two K\"ahler potentials $u_0,u_1\in \Homega$ we define 
\[
d_1(u_0,u_1) := \inf\left \{\int_0^1 \int_X |\dot{u}_t| \omega_{u_t}^n dt\right \},
\]
where the infimum is taken over all smooth curves $u(t,x)=u_t(x) \in \Cc^{\infty}([0,1]\times X)$ such that $u_t\in \Homega$, for all $t\in[0,1]$. Here $\dot{u}_t$ is the $t$-derivative of $u$. As shown in \cite{da1,da2}  $d_1$ is a bona fide metric on $\mathcal H$. A curve $[0,1] \ni t \mapsto u_t \in \PSH(X,\omega) \cap L^{\infty}(X)$ is called a weak geodesic segment if the complexified curve 
\[
Y:= ([0,1]\times \mathbb{R}) \times X \ni (z,x) \mapsto U(z,x):= u({\rm Re}(z), x)
\]
satisfies  $\pi_2^* \omega + dd^c U \geq 0$ and 
\[
(\pi_2^* \omega +dd^c U)^{n+1} =0
\]
in the sense of measures in $Y$. By the main results of \cite{ch0}  if $u_0,u_1$ are in $\Homega$ then there exists a unique weak geodesic segment $t \to u_t$ connecting $u_0$ and $u_1$, with bounded Laplacian on the product $Y$. 

By approximation one can define the finite energy geodesics connecting any $u_0,u_1\in \Ec^1$. As shown in \cite{da1,da2}, the distance $d_1$ can be extended to $\Ec^1$ making $(\Ec^1,d_1)$ a complete geodesic metric space which is the completion of $(\Homega,d_1)$ \cite[Theorem 2]{da2}. For details on this we refer the interested reader to the original papers, as well as the comprehensive recent survey \cite[Chapter 3]{Da18}.

\subsection{The energy functionals}

In this section we recall well known facts from the literature about the canonical functionals of K\"ahler geometry, and their extensions to the finite energy space $\mathcal E^1$.

\subsubsection*{The Monge-Amp\`ere energy and its contracted version}
The Monge-Amp\`ere  energy (often referred to as Aubin-Mabuchi or Aubin-Yau energy) is defined as 
\begin{equation}\label{eq: def MA energy}
\AM(u)= \frac{1}{(n+1)V} \sum_{j=0}^{n} \int_X u\, \omega_u^{j} \wedge \omega^{n-j}, u\in \PSH(X,\omega)\cap L^{\infty}(X). 
\end{equation} 
Given a smooth positive closed $(1,1)$-form $\chi$ the $\chi$-contracted version of the Monge-Amp\`ere energy is defined as 
\begin{equation}
	\label{eq: def contracted MA energy}
	\AM_{\chi}(u) = \frac{1}{nV}\sum_{j=0}^{n-1}\int_X u\omega_u^{j} \wedge \omega^{n-1-j}\wedge  \chi, \ u \in \PSH(X,\omega) \cap L^{\infty}(X). 
\end{equation}

Formally, the first order variation of the two energy functionals $\AM$ and $\AM_{\chi}$ along a curve $t \to u_t$ is given by similar formulas: 
\begin{equation}
	\label{eq: derivative of AM}
	\frac{d}{dt}\AM(u_t)= \frac{1}{V} \int_X \dot u_t\, \omega_{u_t}^n\ \text{and}\ \frac{d}{dt}\AM_\chi(u_t)= \frac{1}{V} \int_X \dot u_t\, \chi \wedge \omega_{u_t}^{n-1}. 
\end{equation}
As a consequence,  $v \in \Hnormalize$ is a critical point of $\AM_\chi$ if  and only if 
\[
\Tr^{\omega_v} (\chi):= n \frac{\chi \wedge \omega_v^{n-1}}{\omega_v^n}
\]
is constant. Applying  \eqref{eq: derivative of AM} for $u_t:= tu+(1-t)v$ with $u,v \in \PSH(X,\omega)\cap L^{\infty}(X)$ an elementary integration recovers the well known formulas 
\begin{flalign}
	\AM(u) -\AM(v)& = \frac{1}{(n+1)V}\sum_{k=0}^n   \int_X (u-v) \omega_u^k\wedge \omega_v^{n-k} \label{eq: AM u - AM v} \\
	\AM_{\chi}(u)- \AM_{\chi}(v) & = \frac{1}{nV}\sum_{k=1}^n   \int_X (u-v) \omega_u^k\wedge \omega_v^{n-k-1}\wedge \chi. \label{eq: AM chi u -v}
\end{flalign}
An integration by parts then shows that, for $u,v\in \PSH(X,\omega)\cap L^{\infty}(X)$,
\begin{flalign}
\int_X (u-v) \omega_u^{k} \wedge \omega_v^{n-k} &  \leq \int_X (u-v) \omega_u^{\ell} \wedge \omega_v^{n-\ell}, 0 \leq \ell \leq k\leq n, \label{eq: integration by parts mixed MA} \\
	\frac{1}{V}\int_X (u-v) \omega_u^n &\leq  \AM(u) -\AM(v) \leq \frac{1}{V} \int_X (u-v) \omega_v^n \label{eq: inequality for AM} \\
	\frac{1}{V} \int_X(u-v) \omega_u^{n-1}\wedge \chi &\leq  \AM_{\chi}(u) -\AM_{\chi}(v) \leq \frac{1}{V}\int_X (u-v) \omega_v^{n-1}\wedge \chi. \label{eq: inequality for AM chi}
\end{flalign}
In particular $\AM$ and $\AM_{\chi}$ are non-decreasing and one can extend these functionals to the whole space $\PSH(X,\omega)$: 
\begin{flalign*}
\AM(u) &:= \inf \{\AM(v) \setdef u\leq v \in \PSH(X,\omega)\cap L^{\infty}(X)\}, \\
\AM_{\chi}(u)& := \inf \{\AM_{\chi}(v) \setdef u\leq v \in \PSH(X,\omega)\cap L^{\infty}(X)\}.
\end{flalign*}
Given $u\in \PSH(X,\omega)$, it was shown in \cite[Lemma 2.7 and Proposition 2.8]{bbgz} that $u\in \Ec^1$ if and only if $\AM(u)$  is finite, which  also implies that $\AM_{\chi}$ is finite.  It was shown in \cite[Lemma 4.15]{da2} and \cite[Section 4]{bdl} that the energy functionals $\AM,\AM_{\chi}$ can be extended to $(\Ec^1,d_1)$ as $d_1$-Lipschitz functionals. Using an approximation argument together with \cite[Lemma 5.2]{da2} one can argue that all the above identities and inequalities hold for $u,v\in \Ec^1$. For details see \cite[Section 4]{bdl}, as well as \cite{Da18}.

\subsubsection*{The Aubin $\AubinI$ functional}
We recall the definition of the $\AubinI$ functional introduced by Aubin \cite[Section III]{Au} (and extended to $\mathcal E^1$ in \cite[Section 1.4]{bbegz}):
\[
\AubinI(u_0,u_1)= \frac{1}{V}\int_X (u_0-u_1)(\omega^n_{u_1} - \omega^n_{u_0}), \ \ u_0,u_1 \in \mathcal E^1.
\]

From the definition it is clear that $\AubinI$ is symmetric and invariant under adding constants. By an integration by parts we see that $\AubinI$ is non-negative. On the other hand, \cite[Theorem 3]{da2} implies that $I$ is $d_1$-continuous in both components.

Also, \cite[Lemma 1.9]{bbegz} implies that $I$ is non-degenerate, i.e., $I(u_0,u_1)=0$ if and only if $u_0 = u_1 + c$ for some $c \in \Bbb R$.

\begin{lemma}
	\label{lem: compare I and AM}
	For every $u_0\in \Hnormalize$ and $u_1\in \Ec^1\cap \AM^{-1}(0)$ the following estimates hold:
\begin{equation}\label{eq: JIdoubleestimate}
\frac{1}{n(n+1)}\AubinI(u_0,u_1)\leq \AM_{\omega_{u_0}}(u_1)- \AM_{\omega_{u_0}}(u_0)\leq \AubinI(u_0,u_1).
\end{equation}
\end{lemma}
\begin{proof}
	Fix $u_0 \in \Hnormalize$ and $u_1\in \Ec^1\cap \AM^{-1}(0)$.  It follows from \eqref{eq: inequality for AM chi}  and \eqref{eq: integration by parts mixed MA} that 
\begin{flalign}
&\AM_{\omega_{u_0}}(u_1)- \AM_{\omega_{u_0}}(u_0) = \frac{1}{nV}\sum_{k=1}^n \int_X (u_1-u_0) \omega_{u_0}^k \wedge \omega_{u_1}^{n-k}, \label{eq: compare I and AM chi 1}  \\
 & \AM_{\omega_{u_0}}(u_1)- \AM_{\omega_{u_0}}(u_0) \leq \frac{1}{V}\int_X (u_1-u_0)\omega_{u_0}^n. 	\label{eq: compare I and AM chi 2} 
\end{flalign}
Since $\AM(u_0)=\AM(u_1)=0$ it follows from \eqref{eq: AM chi u -v} and \eqref{eq: inequality for AM} that 
\begin{flalign}
	&\int_X (u_1-u_0) \omega_{u_1}^n  \leq  0 \leq \int_X (u_1-u_0) \omega_{u_0}^n, \label{eq: compare I and AM chi 3}\\
	&\sum_{k=0}^n \int_X (u_1-u_0) \omega_{u_0}^k \wedge \omega_{u_1}^{n-k} =0. \label{eq: compare I and AM chi 4}
\end{flalign}
Thus the second inequality in \eqref{eq: JIdoubleestimate} follows from \eqref{eq: compare I and AM chi 2} and \eqref{eq: compare I and AM chi 3}.  From \eqref{eq: compare I and AM chi 4} and \eqref{eq: compare I and AM chi 1} it follows that
\begin{equation}\label{eq: compare I and AM chi 5}
	\AM_{\omega_{u_0}}(u_1) -\AM_{\omega_{u_0}}(u_0) = \frac{-1}{nV} \int_X (u_1-u_0) \omega_{u_1}^n.  
\end{equation}
From \eqref{eq: compare I and AM chi 4}, \eqref{eq: AM u - AM v} and \eqref{eq: integration by parts mixed MA} it follows that 
\begin{flalign}\label{eq: compare I and AM chi 6}
	n \int_X (u_1-u_0) \omega_{u_1}^n + \int_X (u_1-u_0) \omega_{u_0}^n \leq \AM(u_1)-\AM(u_0) =0. 
\end{flalign}
From \eqref{eq: compare I and AM chi 6} and the definition of the functional $\AubinI$ we see that 
\begin{flalign*}\label{eq: compare I and AM chi 7}
\AubinI(u_0,u_1)  & = \frac{1}{V}\left ( \int_X (u_1-u_0) \omega_{u_0} + n  \int_X (u_1-u_0) \omega_{u_1}\right) + \frac{-(n+1)}{V} \int_X (u_1-u_0)\omega_{u_1}^n\\  
& \leq 	\frac{-(n+1)}{V} \int_X (u_1-u_0)\omega_{u_1}^n.  
\end{flalign*}
This combined with \eqref{eq: compare I and AM chi 5} gives the first inequality in \eqref{eq: JIdoubleestimate}. 
\end{proof}

We also recall that, as shown in \cite[Theorem 1.8]{bbegz}, the $\AubinI$-functional satisfies a quasi-triangle inequality, i.e., there exists a constant $c_n>0$ depending only on $n$ such that 
\begin{equation}
	\label{eq: quasi triangle for Aubin I}
	c_n\AubinI(u_0,u_1) \leq  \AubinI(u_0,v) +  \AubinI(u_1,v) , \ u_0,u_1,v\in \Ec^1. 
\end{equation}

\subsubsection*{The $J$ functional}
The $J$ functional, introduced by Aubin \cite[Section III]{Au},  is defined as 
\[
J_{\omega}(u) :=  \frac{1}{V}\int_X u\, \omega^n - \textup{AM}(u), \ u \in \Ec^1,
\]
If $u\in \Ec^1\cap \AM^{-1}(0)$ then we have $J_{\omega}(u)= V^{-1} \int_X u\,  \omega^n$. It follows from \eqref{eq: inequality for AM} that $J_{\omega}(u)\geq 0$, for all $u\in \Ec^1$.  It is a classical fact that (see for example \cite[Inequality (2.7) page 193]{bbgz})  
\[
\frac{1}{n+1} \AubinI(u,0) \leq J_{\omega}(u) \leq \AubinI(u,0),  \ u \in \Ec^1. 
\]
In the literature this functional is mainly denoted by $J$ (see \cite{DR17}). We use the notation $J_{\omega}$ to distinguish this functional from the complex structure $J$ of $X$.

\subsubsection*{The (twisted) K-energy}
Fix a closed smooth positive $(1,1)$-form $\chi$. 
The (extended) K-energy  $\Kenergy: \mathcal E^1 \to (-\infty,\infty]$ is defined as follows:
\begin{equation}\label{eq: Kendfef}
\Kenergy(u):= 
\textup{Ent}(\omega^n,\omega^n_u)
+  \bar{S}\textup{AM}(u) 
-  n\textup{AM}_{\Ric \omega}(u),
\end{equation}
where $\textup{Ent}(\omega^n,\omega^n_u)$ is the entropy of the measure $\omega_u^n$ with respect to $\omega^n$: 
$$\textup{Ent}(\omega^n,\omega^n_u)=V^{-1}\int_X \log(\omega_u^n/\omega^n)\omega_u^n,$$ 
and $\overline{S}=\frac{1}{V}\int_X S_\omega \omega^n$ is the average scalar curvature, that can be seen to be independent of the choice of background metric. The $\chi$-twisted K-energy is defined as 
\begin{equation*}
	\Kenergy_{\chi} (u) = \Kenergy(u) + n\AM_{\chi}(u)-\frac{n}{V}\Big(\int_X \chi \wedge \omega^{n-1} \Big)\AM(u). 
\end{equation*}

When restricted to $\Homega$, the above formula for the K-energy was originally introduced by Chen--Tian \cite{ch1}, with a similar formula already appearing in \cite{t1}.  

The first order variation of $\Kenergy_\chi$ is given by the following formula:
\begin{equation}
	\label{eq: derivative of K chi}
	\langle D\Kenergy_\chi(u), \delta v\rangle = V^{-1} \int_X \delta v (\bar S_\chi-S_{\omega_u} + \Tr^{\omega_u}(\chi))\omega^n_u,
\end{equation}
where $\bar{S}_\chi = nV^{-1}\int_X (\Ric(\omega) - \chi) \wedge \omega^{n-1}$.
Hence, the critical points of $\Kenergy_{\chi}$ are the twisted csck potentials, as these satisfy $\bar S_{\chi}-S_{\omega_u} + \Tr^{\omega_u}(\chi)=0$.

In \cite[Proposition 5.26]{DR17} it is shown that this functional naturally extends to the $L^1$-Mabuchi completion of $\Homega$, which is just $\mathcal E^1$, and the extension is $d_1$-lsc (in \cite[Theorem 1.2]{bdl}). For more information on the metric spaces $(\Homega,d_p)$ we refer to \cite{da1,da2}, where these metric structures were introduced. In this note we will  focus on the case $p=1$.

 It follows from \cite[Theorem 4.7]{bdl} that the K-energy $\Kenergy$ as well as its twisted version $\Kenergy_{\chi}$  is convex along finite energy geodesics in $\Ec^1$.  We introduce the $\mathcal E^1$-minimizer set of $\Kenergy$ and $\Kenergy_{\chi}$:
\begin{flalign*}
\mathcal M^1 & := \{u \in \mathcal E^1 \cap \AM^{-1}(0)| \ \Kenergy(u) = \inf_{v \in \mathcal E^1}\Kenergy(v)\}, \\
\mathcal M^1_{\chi}& := 	 \{u \in \mathcal E^1 \cap \AM^{-1}(0)| \ \Kenergy_{\chi}(u) = \inf_{v \in \mathcal E^1}\Kenergy_{\chi}(v)\}. 
\end{flalign*}
\begin{remark}\label{rem: mimimizers}Given that $\Kenergy:\mathcal E^1 \to \Bbb R \cup \{ \infty\}$ is convex \cite[Theorem 1.2]{bdl}, it is straightforward to check that the minimizer set $\mathcal M^1$ (when non-empty) is totally geodesic with respect to the finite energy geodesics of $\mathcal E^1$. When $\chi >0$, using this, and the fact that $\AM_\chi$ is strictly convex on $\mathcal M^1$ (\cite[Theorem 4.12]{bdl}), if there exists   $v_\chi \in \mathcal M^1$ such that $\AM_\chi(v_\chi)=\inf_{v \in \mathcal M^1}\AM_\chi(v)$, then $v_\chi$ is unique. Also, \cite[Theorem 4.13]{bdl} shows that $\mathcal{M}^1_{\chi}$ contains at most one element. 
\end{remark}
 
\subsection{The action of automorphisms}
We let $G:= \AUT_0(X)$ denote the identity component of the  group of biholomorphisms of $X$. If $g\in G$ and $\varphi\in \Hnormalize$ we define $g.\varphi$ to be the unique element in $\Hnormalize$ such that 
\[
g^*(\omega_{\varphi}) = \omega +i\ddbar (g.\varphi).
\]
We refer the reader to \cite[Section 5.2, Lemmas 5.8-5.11]{DR17} for a detailed discussion on this action, which extends in a $d_1$-Lipschitz manner to $\Ec^1\cap \AM^{-1}(0)$.

\section{The finite energy continuity method}

Given a K\"ahler form $\chi$ cohomologous to $\omega$, in case $\mathcal M^1$ is non-empty, for any $\lambda >0$ we will show in Proposition \ref{prop: weak_continuity_prop} below  that it is possible to find a unique $v_\lambda \in \mathcal E^1 \cap \textup{AM}^{-1}(0)$  minimizing $\Kenergy_{\lambda \chi}$, giving rise to the \emph{finite energy continuity curve} $\lambda \to v_\lambda$. As we outline now, properties of this curve, especially in connection with the choice of twisting form $\chi$ play a vital role in the argument of Theorem \ref{thm: reguarity theorem}. 

\paragraph{Outline of the proof of Theorem \ref{thm: reguarity theorem}.} Before entering into the details we describe our argument briefly.  Assume that $\omega$ is a csck metric and let $v\in \mathcal{M}^1$.  Let $v_j \in \mathcal H_0$ such that $d_1(v,v_j) \to 0$ (such a sequence exists by \cite[Theorem 2]{da2}). For each $j$, setting $\chi_j:= \omega_{v_j}$, by Remark \ref{rem: mimimizers} we can find a unique $\varphi_j\in \mathcal{M}^1$ such that 
\[
\AM_{\chi_j}(\varphi_j) = \inf \{ \AM_{\chi_j}(u) \setdef u \in \mathcal{M}^1\}. 
\] 
Moreover, as shown in Proposition \ref{prop: weak_continuity_prop} below we have a uniform control of $\AubinI(\varphi_j,v)$ by $\AubinI(v_j,v)$. The latter goes to $0$ as $j\to +\infty$ (since $d_1(v,v_j) \to 0$), hence $\lim_{j\to +\infty} \AubinI(\varphi_j,v)=0$.   Generalizing the arguments of \cite[Theorem 4.4, Proposition 4.7]{bb}, we will show in Proposition \ref{prop: AMtwist_minimizer} that $\varphi_j$ is smooth and there exists an automorphism $g_j\in G$ such that $\varphi_j=g_j.0$.  In Lemma \ref{lem: autlimit} we use the reductiveness  of $G$ to show that $g_j$ converge smoothly to some $g\in G$. 

Putting everything together we arrive at $0=\lim_{j\to +\infty} \AubinI(g_j.0,v)=\AubinI(g.0,v)$. Since $\AubinI$ is non-degenerate, it then follows that $v=g.0$ is smooth csck, as desired. 

  \medskip

We now provide the details of the proof of Theorem \ref{thm: reguarity theorem}. 
\begin{proof}[Proof of Theorem \ref{thm: reguarity theorem}] Without loss of generality we can assume that $\omega$ is a csck metric and let $v\in \mathcal{M}^1$.  We want to show that $v = g.0$ for some $g \in G$. By \cite{da2} there exists $v_j \in \Hnormalize$ with $d_1(v_j,v) \to 0$. For notational convenience we set $\chi_j:= \omega+i\ddbar v_j$, which is a K\"ahler form.  

Fix $\lambda >0$ momentarily and consider the twisting forms $\lambda \chi_j$. Since $\omega$ is csck, the set $\mathcal{M}^1$ is non-empty. By Proposition \ref{prop: weak_continuity_prop} below the functional $\Kenergy_{\lambda \chi_j}$ admits a unique minimum $v^{\lambda}_j \in \mathcal E^1\cap \AM^{-1}(0)$ satisfying 
\[
\AubinI(v^{\lambda}_j,v_j) \leq n(n+1) \AubinI(v_j,v).
\] 
As $\AubinI$ satisfies the quasi-triangle inequality \eqref{eq: quasi triangle for Aubin I} we have in fact
\begin{flalign}\label{eq: lambda inequality}
\AubinI (v^\lambda_j,v) & \leq \frac{1}{c_n}  \left (\AubinI(v^{\lambda}_j, v_j) + \AubinI(v_j,v) \right )\nonumber \\ 
& \leq  \frac{1}{c_n} \left (n(n+1)\AubinI(v_j,v) + \AubinI(v_j,v) \right )\nonumber \\ 
&\leq \frac{n^2+n+1}{c_n} \AubinI(v_j,v). 
\end{flalign}
Fix $j\in \mathbb{N}$. It follows from Proposition \ref{prop: weak_continuity_prop} below that there exists $\varphi_j \in \mathcal{M}^1$ such that
\[
\lim_{\lambda \to 0^+}d_1(v^{\lambda}_j, \varphi_j)  =0 \ \ \textup{ and } \ \ \AM_{\chi_j} (\varphi_j) = \inf \{\AM_{\chi_j}(u) \setdef u \in \mathcal{M}^1\}. 
\]
Letting $\lambda \to 0^+$ in \eqref{eq: lambda inequality}, using the $d_1$-continuity of $I$ we arrive at 
\begin{equation}
	\label{eq: continuity path 1}
	\AubinI(\varphi_j,v) \leq \frac{n^2+n+1}{c_n} \AubinI(v_j,v). 
\end{equation}
It follows from Proposition \ref{prop: AMtwist_minimizer} below that, for each $j$, $\varphi_j$ is smooth and there exists $g_j\in G$ such that $\varphi_j=g_j.0$. Letting $j \to \infty$ in \eqref{eq: continuity path 1} we obtain $\AubinI(g_j.0,v) \to 0$, which by \cite[Proposition 2.3]{bbegz} is equivalent to $\|g_j.0-v\|_{L^1(X)} \to0$ and $\AM(g_j. 0) \to \AM(v)$. By \cite[Proposition 5.9]{da2} this is further equivalent to $d_1(g_j.0,v)\to 0$. Finally, by Lemma \ref{lem: autlimit} below there exists $g \in G$ such that $g.0=v$, finishing the proof.
\end{proof}

In the remaining part of this section we prove Proposition \ref{prop: weak_continuity_prop} and Proposition \ref{prop: AMtwist_minimizer}, that represent the main analytic tools in the proof of Theorem \ref{thm: reguarity theorem} above.

\begin{proposition}\label{prop: weak_continuity_prop}
Assume that $\mathcal M^1$ is nonempty and $u \in \Homega$. Then for any $\lambda > 0$, there exists a unique minimizer $v^\lambda \in\Ec^1\cap \AM^{-1}(0)$ of $\Kenergy_{\lambda\omega_u}$. The curve $[0,\infty) \ni \lambda \to v^\lambda \in \mathcal E^1\cap \AM^{-1}(0)$ is $d_1$-continuous, $d_1$-bounded with $v^0 = \lim_{\lambda \to 0}v^\lambda$ being the unique minimizer of $\AM_{\omega_u}$  on $\mathcal M^1$.  Additionally, for any $w \in \mathcal M^1, \ \lambda \geq 0$ we have
\begin{equation}\label{eq:Iestimate}
\AubinI (v^\lambda,u) \leq n(n+1) \AubinI (w,u).
\end{equation}
\end{proposition}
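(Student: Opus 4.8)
The plan is to run the direct method for $\mathcal K_{\lambda\omega_u}=\mathcal K+n\lambda J_{\omega_u}$ in the complete metric space $(\mathcal E^1,d_1)$, using that $\mathcal K$ is $d_1$-lsc, convex along finite energy geodesics, and bounded below (the last because $\mathcal M^1\neq\emptyset$), while for the positive form $\omega_u$ the functional $J_{\omega_u}$ is finite, $d_1$-continuous, and strictly convex along finite energy geodesics (all by \cite{bdl}). Two consequences of \eqref{eq: JIdoubleestimate} will be used throughout: $J_{\omega_u}(v)\ge J_{\omega_u}(u)=0$ on $\mathcal E^1\cap\AM^{-1}(0)$, with equality only at $v=u$; and an upper bound on $J_{\omega_u}$ over a subset of $\mathcal E^1\cap\AM^{-1}(0)$ forces an upper bound on $\mathcal I(u,\cdot)$, hence --- via the comparison between $d_1$-growth and $J_\omega$ from \cite[Proposition 5.5]{dr}, together with the elementary fact that $J_{\omega_u}-J_\omega$ is bounded in terms of $\|u\|_\infty$ --- an upper bound on $d_1(0,\cdot)$.

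For existence of $v^\lambda$ ($\lambda>0$) I would observe that on a $d_1$-sublevel set of $\mathcal K_{\lambda\omega_u}$ inside $\AM^{-1}(0)$ one has $n\lambda J_{\omega_u}(v)\le\mathcal K_{\lambda\omega_u}(v)-\inf\mathcal K$ bounded, hence $d_1(0,v)$ bounded, hence $\mathcal K(v)=\mathcal K_{\lambda\omega_u}(v)-n\lambda J_{\omega_u}(v)$ bounded and so the entropy bounded; by the compactness theorem for sets of bounded energy and bounded entropy (\cite{bbegz}, together with the $d_1$-convergence upgrade in \cite{bdl}) the set is $d_1$-precompact, and $d_1$-closed by lower semicontinuity, so the infimum is attained. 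Uniqueness is immediate: if $v_0,v_1$ both minimized, then $\mathcal K$ would be affine and $J_{\omega_u}$ strictly convex along the finite energy geodesic joining them, forcing it to be constant.

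Next I would establish \eqref{eq:Iestimate}. For $w\in\mathcal M^1$ and $\lambda>0$, combining $\mathcal K_{\lambda\omega_u}(v^\lambda)\le\mathcal K_{\lambda\omega_u}(w)$ with $\mathcal K(w)\le\mathcal K(v^\lambda)$ gives $J_{\omega_u}(v^\lambda)\le J_{\omega_u}(w)$, and feeding this into \eqref{eq: JIdoubleestimate} twice yields $\tfrac{1}{n(n+1)}\mathcal I(v^\lambda,u)\le J_{\omega_u}(v^\lambda)-J_{\omega_u}(u)\le J_{\omega_u}(w)-J_{\omega_u}(u)\le\mathcal I(w,u)$, which is \eqref{eq:Iestimate}. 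In particular $\mathcal I(v^\lambda,u)$, hence $d_1(0,v^\lambda)$, is uniformly bounded, giving the $d_1$-boundedness of the curve; and $\mathcal K(v^\lambda)$ is finite with $\mathcal K(v^\lambda)\le\mathcal K_{\lambda\omega_u}(u)=\mathcal K(u)$. For the monotonicity, adding the two minimality inequalities at $0<\lambda<\mu$ gives $(\mu-\lambda)\big(J_{\omega_u}(v^\lambda)-J_{\omega_u}(v^\mu)\big)\ge0$, so $\lambda\mapsto J_{\omega_u}(v^\lambda)$ is non-increasing and $\lambda\mapsto\mathcal K(v^\lambda)$ non-decreasing; moreover $\mathcal K(v^\lambda)\le\inf\mathcal K+n\lambda J_{\omega_u}(w)$ shows $\mathcal K(v^\lambda)\to\inf\mathcal K$ as $\lambda\to0^+$.

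Finally, for $d_1$-continuity and the limit at $0$: given $\lambda_j\to\lambda_0\in[0,\infty)$, the potentials $v^{\lambda_j}$ have uniformly bounded $d_1(0,\cdot)$ and, by the bounds on $\mathcal K(v^{\lambda_j})$, uniformly bounded entropy, so a subsequence $d_1$-converges to some $v_*\in\AM^{-1}(0)$. Using $d_1$-lsc of $\mathcal K$, $d_1$-continuity of $J_{\omega_u}$, and $\mathcal K_{\lambda_j\omega_u}(v^{\lambda_j})\le\mathcal K_{\lambda_j\omega_u}(v^{\lambda_0})\to\mathcal K_{\lambda_0\omega_u}(v^{\lambda_0})$ when $\lambda_0>0$ (resp.\ $\le\mathcal K_{\lambda_j\omega_u}(w)\to\inf\mathcal K$ for $w\in\mathcal M^1$ when $\lambda_0=0$), one checks that $v_*$ minimizes $\mathcal K_{\lambda_0\omega_u}$ (resp.\ lies in $\mathcal M^1$ and, by the bound $J_{\omega_u}(v^\lambda)\le J_{\omega_u}(w)$ established above together with $d_1$-continuity of $J_{\omega_u}$, minimizes $J_{\omega_u}$ over $\mathcal M^1$). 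In either case $v_*$ is the unique such minimizer --- for $\lambda_0>0$ by the uniqueness in the second step, for $\lambda_0=0$ by the uniqueness recorded just before the Proposition (\cite[Theorem 4.12]{bdl} and geodesic completeness of $\mathcal M^1$) --- so $v_*$ does not depend on the subsequence and $v^{\lambda_j}\to v_*$. This gives $d_1$-continuity on $[0,\infty)$, identifies $v^0:=\lim_{\lambda\to0}v^\lambda$ as the unique minimizer of $J_{\omega_u}$ on $\mathcal M^1$, and extends \eqref{eq:Iestimate} to $\lambda=0$. I expect the main obstacle to be the first two steps --- making the direct method work in the non-locally-compact space $(\mathcal E^1,d_1)$, i.e.\ converting $J_{\omega_u}$-coercivity of $\mathcal K_{\lambda\omega_u}$ into genuine $d_1$- and entropy-bounds so that the energy--entropy compactness theorem applies, and the uniqueness of $v^\lambda$, which leans on the strict geodesic convexity of $J_\chi$ for $\chi>0$ from \cite{bdl}; the monotonicity and continuity arguments are then comparatively soft.
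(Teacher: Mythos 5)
Your proposal is correct and follows essentially the same route as the paper: coercivity of $\mathcal K_{\lambda\omega_u}$ plus the energy--entropy compactness theorem of \cite{bbegz} for existence, strict geodesic convexity of $J_{\omega_u}$ from \cite{bdl} for uniqueness, the ``two minimality inequalities'' trick for \eqref{eq:Iestimate}, and a subsequence-plus-uniqueness argument for $d_1$-continuity including at $\lambda=0$. The only additions are the monotonicity observations for $\lambda\mapsto J_{\omega_u}(v^\lambda)$ and $\lambda\mapsto\mathcal K(v^\lambda)$, which are correct but not needed in the paper's version of the continuity argument; you also correct a harmless coefficient typo in the paper ($\mathcal K_{\lambda\omega_u}=\mathcal K+n\lambda J_{\omega_u}$, not $\mathcal K+\frac{\lambda}{n}J_{\omega_u}$).
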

\begin{proof}First we show that the curve $\lambda \to v_\lambda$ described in the statement exists. Fixing $\lambda >0$, observe that $\Kenergy_{\lambda \omega_u}= \Kenergy + n\lambda \AM_{\omega_u}$ on $\Ec^1\cap \AM^{-1}(0)$. Let $u_j \in \mathcal E^1 \cap \AM^{-1}(0)$ be a minimizing sequence of $\Kenergy_{\lambda \omega_u}$. As $\mathcal M^1$ is nonempty, it follows that $\Kenergy$ is bounded from below. It follows from \eqref{eq: JIdoubleestimate} that $\AM_{\omega_u}(v) \geq \AM_{\omega_u}(v)$ for every $v\in \Ec^1\cap \AM^{-1}(0)$. Thus $\Kenergy_{\lambda \omega_u}$ is also bounded from below on $\Ec^1\cap \AM^{-1}(0)$.  These lower bounds together give that both $\AM_{\omega_u}(u_j)$ and $\Kenergy(u_j)$ are in fact uniformly bounded, $j \in \Bbb N$. 

As $[\omega_u]_{dR}=[\omega]_{dR}$, \cite[Proposition 5.5]{DR17} gives that $d_1(0,u_j)$ is also uniformly bounded. Using this and the uniform bound on $\Kenergy(u_j)$, we get that $\Ent(\omega^n,\omega^n_{u_j})\geq 0$ is also uniformly bounded above, hence we can apply \cite[Theorem 2.17]{bbegz} (see \cite[Theorem 5.6]{DR17} for an equivalent formulation that fits our context most). By this last compactness result, from $u_j$ we can extract a $d_1$-convergent subsequence, converging to $v^{\lambda}\in \mathcal E^1$. By the $d_1$-lower semi-continuity of $\Kenergy_{\lambda \omega_u}$ \cite[Theorem 1.2]{bdl} we get that $v^{\lambda}$ is a $\mathcal E^1$-minimizer of $\Kenergy_{\lambda \omega_u}$ and by \cite[Theorem 4.13]{bdl} this minimizer has to be unique.
\medskip

Now we prove \eqref{eq:Iestimate}. Let $w \in \mathcal M^1$ and $\lambda >0$. As $v^\lambda$ and $w$ minimize $\Kenergy_{\lambda\omega_u}$ and $\Kenergy$ respectively, we can write the following:
	\[
	\Kenergy(w) + \lambda n\AM_{\omega_u}(v^\lambda) \leq  \Kenergy(v^\lambda) +\lambda n \AM_{\omega_u}(v^\lambda)
		=\Kenergy_{\lambda \omega_u}(v^\lambda)
		\leq  \Kenergy_{\lambda \omega_u}(w)
		= \Kenergy(w) + \lambda n\AM_{\omega_u}(w),
		\]
hence $\AM_{\omega_u}(v^\lambda) \leq \AM_{\omega_u}(w)$. Subtracting  $\AM_{\omega_u}(u)$ from this and using \eqref{eq: JIdoubleestimate} we get \eqref{eq:Iestimate} for $\lambda > 0$. 
\medskip

We next argue that $\{v^\lambda\}_{\lambda >0}$ is $d_1$-bounded and relatively $d_1$-compact. It follows from \eqref{eq:Iestimate} and the quasi-triangle inequality \eqref{eq: quasi triangle for Aubin I} that  
\[
\AubinI(v^{\lambda}, 0) \leq \frac{1}{c_n} \left (\AubinI(v^{\lambda},u) + \AubinI(u,0) \right ) \leq \frac{1}{c_n} \left (n(n+1)\AubinI(u,w) + \AubinI(u,0) \right ).
\]
In particular $\AubinI(v^{\lambda},0)$ is uniformly bounded in $\lambda\in (0,1)$. Since $\AM(v^{\lambda})=0$ it follows from \eqref{eq: inequality for AM} that $\int_X v^{\lambda} \omega_{v_{\lambda}}^n \leq 0$. From this and the definition of the $\AubinI$ functional we see that $\int_X v^{\lambda} \omega^n$ is uniformly bounded. Thus \cite[Proposition 2.7]{GZ05} reveals that $\sup_X v^{\lambda}$ is uniformly bounded. 
 Hence \cite[Proposition 5.5]{DR17} implies that $d_1(0,v^\lambda)$ is uniformly bounded, $\lambda >0$. Observe that we have a trivial upper bound $\Kenergy_{\lambda \omega_u}(v^\lambda) \leq \Kenergy_{\lambda \omega_u}(0)=\Kenergy(0)$. Since all terms except the first in the expression of $\Kenergy_{\lambda \omega_u}(v^\lambda)$ from \eqref{eq: Kendfef} are bounded by $d_1(0,v^\lambda)$ it follows that  $\Ent(\omega^n,\omega^n_{v^\lambda}) \geq 0$ is also uniformly bounded from above, ultimately giving that $\{ v^\lambda\}_{\lambda >0}$ is relatively $d_1$-compact, again by \cite[Theorem 2.17]{bbegz}. 

\medskip

We claim now that $\lambda \to v^\lambda$ is $d_1$-continuous for $\lambda>0$. Indeed, assume that $\{\lambda_j\}_j$ converges to $\lambda>0$. As shown above, the sequence $v^{\lambda_j}$ is  relatively $d_1$-compact, hence it suffices to prove that any limit of this sequence coincides with $v^{\lambda}$. So, we can assume that $v^{\lambda_j}\to v$ and we will show that $v=v^{\lambda}$. For any $h\in \Ec^1$ we have
\begin{eqnarray*}
	\Kenergy_{\lambda_j\omega_u}(h) \geq \Kenergy_{\lambda_j\omega_u} (v^{\lambda_j}) = \Kenergy(v^{\lambda_j}) +\lambda_jn \AM_{\omega_u}(v^{\lambda_j}). 
\end{eqnarray*}
Letting $j\to +\infty$, we can use that $\Kenergy$ is $d_1$-lsc and $\AM_{\omega_u}$ is $d_1$-continuous to obtain that $\Kenergy_{\lambda\omega_u}(h)\geq \Kenergy_{\lambda \omega_u}(v)$. Uniqueness of minimizers of $\Kenergy_{\lambda\omega_u}$ \cite[Theorem 4.13]{bdl} now gives that $v^\lambda=v$, what we wanted to prove.

Finally, we focus on continuity at $\lambda =0$.  Using relative compactness of $\{ v^\lambda\}_{\lambda >0}$, we can find $\lambda_j \to 0$ and $v^0 \in \mathcal E^1$ such that $d_1(v^{\lambda_j},v^0) \to 0$. We will show that $v^0$ is independent of the choice of $\lambda_j$. By the joint lower semi-continuity of $(h,\lambda) \to \Kenergy_{\lambda \omega_u}(h)$ it follows that $v^0 \in \mathcal M^1$. Let $q \in \mathcal M^1$ be arbitrary. Then we have that 
$$
\Kenergy(q) \leq \Kenergy(v^{\lambda_j}) \ \ \textup{ and } \ \ \Kenergy_{\lambda_j \omega_u}(v^{\lambda_j}) \leq \Kenergy_{\lambda_j\omega_u}(q),
$$ 
implying that $n\lambda_j\AM_{\omega_u}(v^{\lambda_j}) \leq  n\lambda_j \AM_{\omega_u}(q)$, hence $\AM_{\omega_u}(v^{\lambda_j}) \leq \AM_{\omega_u}(q)$. After letting $j \to \infty$, it follows that $\AM_{\omega_u}(v^{0}) \leq \AM_{\omega_u}(q)$, hence $v^0$ is a  minimizer of $\AM_{\omega_u}$ on $\mathcal M^1\cap \AM^{-1}(0)$. Since $\omega_u$ is K\"ahler it follows from \cite[Theorem 4.12,Theorem 4.13]{bdl}  that $v^0$ is uniquely determined, finishing the proof.  
\end{proof}

As detailed in the next proposition, whose proof builds on the arguments of \cite{bb}, if a smooth csck metric exists, then the minimizer of $\AM_{\omega_u}$ on $\mathcal M^1$ can be given more specifically:

\begin{proposition}\label{prop: AMtwist_minimizer} 
Assume that $v$ is a csck  potential  and $u\in \Homega$. Then there exists  $g \in G$ such that
\[
\inf_{w \in \mathcal M^1}\AM_{\omega_u}(w)= \AM_{\omega_u}(g.v).
\]
\end{proposition}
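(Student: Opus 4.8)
The plan is to exploit the $G$-invariance of the K-energy together with the fact, guaranteed by Proposition \ref{prop: weak_continuity_prop}, that $J_{\omega_u}$ admits a unique minimizer on $\mathcal{M}^1$. Since $v$ is a smooth cscK potential, it lies in $\mathcal{M}^1$ (cscK metrics are minimizers of $\mathcal{K}$ by the results recalled in the excerpt), and so is every $g.v$ for $g \in G$, because $\mathcal{K}$ is $G$-invariant and $\AM$ is $G$-invariant after the standard normalization. Thus the orbit $G.v$ sits inside $\mathcal{M}^1$, and the problem reduces to minimizing $J_{\omega_u}$ over this orbit and showing the infimum over the whole of $\mathcal{M}^1$ is attained there. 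By the estimate \eqref{eq: JIdoubleestimate}, minimizing $J_{\omega_u}$ on $G.v$ is, up to additive constants and bounded factors, the same as minimizing $\mathcal{I}(g.v, u)$ over $g \in G$; the point will be to show the minimum over $G.v$ actually equals the minimum over all of $\mathcal{M}^1$.

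First I would recall the convexity/uniqueness machinery of \cite{bb,bdl}: along finite energy geodesics in $\mathcal{E}^1$ the functional $\mathcal{K}$ is convex, so $\mathcal{M}^1$ is geodesically convex and complete, and $J_{\omega_u}$ is strictly convex along such geodesics (modulo the kernel coming from automorphisms, cf. \cite[Theorem 4.12]{bdl}), so its minimizer $v^0$ on $\mathcal{M}^1$ is unique. Next I would invoke the uniqueness theorem for cscK metrics of \cite{bb}, which says that any two cscK potentials in $\mathcal{H}_\omega \cap \AM^{-1}(0)$ differ by the action of some $g \in G$; more precisely, the whole minimizer set $\mathcal{M}^1$, in the presence of a smooth cscK metric $v$, is exactly the orbit closure, and the cscK potentials in it form a single $G$-orbit. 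The key geometric input from \cite{bb} is that for the unique $J_{\omega_u}$-minimizer $v^0$ on $\mathcal{M}^1$, one can produce a geodesic (or a path in the orbit) connecting $v^0$ to a cscK potential along which $\mathcal{K}$ stays constant, forcing that connecting object to be generated by a holomorphic vector field, i.e. to lie in $G.v$.

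Concretely, the steps in order are: (i) verify $G.v \subset \mathcal{M}^1$ using $G$-invariance of $\mathcal{K}$ and $\AM$; (ii) let $v^0 \in \mathcal{M}^1$ be the unique minimizer of $J_{\omega_u}$ furnished by Proposition \ref{prop: weak_continuity_prop}; (iii) show $v^0$ is itself a smooth cscK potential — this is where Theorem \ref{thm: reguarity theorem} (or rather its proof ingredients, since we are building towards it) and the regularity arguments enter, or alternatively one argues directly that the $J_{\omega_u}$-minimizer on the minimizer set must be a genuine critical point of $\mathcal{K}_{\lambda\omega_u}$ in the limit $\lambda \to 0$ and hence regular by \cite{cpz}-type elliptic regularity; (iv) apply the uniqueness theorem of \cite{bb} to conclude $v^0 = g.v$ for some $g \in G$; (v) combine to get $\inf_{w \in \mathcal{M}^1} J_{\omega_u}(w) = J_{\omega_u}(v^0) = J_{\omega_u}(g.v)$. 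I expect the main obstacle to be step (iii): establishing that the weak $J_{\omega_u}$-minimizer on $\mathcal{M}^1$ is smooth and cscK, rather than merely a finite-energy minimizer. The natural route is to realize $v^0$ as the $\lambda \to 0$ limit of the continuity-method solutions $v^\lambda$ from Proposition \ref{prop: weak_continuity_prop}, derive uniform a priori estimates (entropy bounds are already in hand, and one needs to upgrade these to $C^\infty$ estimates in the spirit of \cite{cpz,bb}), and pass to the limit; the $G$-invariance and the $d_1$-compactness control the potential loss of mass, while the uniqueness statement \cite[Theorem 4.12]{bdl} pins down the limit. If instead one wants to avoid full regularity at this stage, an alternative is to run the convexity argument of \cite{bb} directly on $\mathcal{E}^1$: connect $v^0$ to $v$ by a finite energy geodesic, show $\mathcal{K}$ is affine along it (both endpoints being minimizers), invoke the rigidity in \cite{bb} to see the geodesic is induced by a one-parameter subgroup of $G$, and read off $v^0 \in G.v$ — this keeps everything at the level of $\mathcal{E}^1$ and defers smoothness to Theorem \ref{thm: reguarity theorem}.
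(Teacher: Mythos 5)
Your overall scaffolding --- reduce to identifying the unique $J_{\omega_u}$-minimizer $v^0$ on $\mathcal M^1$ with some $g.v$ --- matches the paper, and steps (i), (ii), (iv), (v) are sound. But you correctly flag step (iii) as the crux, and neither of your two routes actually closes it.

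Your first route (extract $C^\infty$ a priori estimates for $v^\lambda$ and pass to the limit) would require the full implicit-function-theorem machinery of \cite{cpz}: the entropy/$d_1$ bounds from Proposition~\ref{prop: weak_continuity_prop} by themselves give nothing beyond $\mathcal E^1$-regularity, and there is no elementary way to ``upgrade'' them to smoothness for the weak minimizer. The paper explicitly notes this heavier route exists but is not needed. Your second route (connect $v^0$ to $\tilde v^0$ by a finite energy geodesic, use that $\mathcal K$ is affine along it, invoke the rigidity of \cite{bb}) has a real gap: the rigidity theorem of \cite{bb}, which says a geodesic along which $\mathcal K$ is affine must be generated by a holomorphic vector field, is established for $C^{1,1}$ geodesics between regular endpoints. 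Here the geodesic has one endpoint $v^0$ that is a priori only in $\mathcal E^1$, so the rigidity cannot be invoked directly, and in fact getting around exactly this is the main content of the proposition.

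What the paper actually does is quite different and worth spelling out. It picks $\tilde v^0 = g.v$ to minimize $J_\omega$ over the orbit $G.v$ (this uses reductivity of $G$), so that the second variation argument of \cite{bb} produces $h\in C^\infty(X)$ making $\tilde v^0 + \lambda h$ an \emph{approximate} critical point of $\mathcal K_{\lambda\omega}$ with differential of size $O(\lambda^2)$. It then connects the \emph{true} $\mathcal E^1$-minimizer $v^\lambda$ of $\mathcal K_{\lambda\omega}$ to the smooth approximate critical point $\tilde v^0+\lambda h$ by a finite energy geodesic and uses Lemma~\ref{lem:deriv av mab along singular} (a new extension of \cite[Lemma 3.5]{bb} to finite energy geodesics with only \emph{one} smooth endpoint) to bound the K-energy derivative at the smooth end by $O(\lambda^2)$. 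Combining this with the fact that the derivative at the $v^\lambda$ end is nonnegative, and with convexity of both $\mathcal K$ and $\AM_\omega$ along finite energy geodesics, forces $\AM_\omega$ to be linear (up to $O(\lambda)$) along each geodesic. Letting $\lambda\to 0$ and invoking endpoint stability of finite energy geodesics, $\AM_\omega$ is linear along the geodesic from $v^0$ to $\tilde v^0$, and the \emph{strict} convexity of $\AM_\omega$ at the $\mathcal E^1$ level \cite[Theorem 4.12]{bdl} forces $v^0 = \tilde v^0$. This argument never needs $v^0$ or $v^\lambda$ to be regular, which is precisely the difficulty you deferred without resolving. The key missing ingredient in your proposal is a mechanism (here: the $O(\lambda^2)$ perturbation at the smooth cscK together with strict convexity of $\AM_\omega$ at the weak level) to pin down the singular endpoint without first proving its regularity.
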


Before elaborating the details of the proof, let us describe briefly the ideas, which build on the proof of the uniqueness of csck metrics in \cite[Theorem 4.4 and Proposition 4.7]{bb}.  

\paragraph{Outline of the proof of Proposition \ref{prop: AMtwist_minimizer}.} By changing the reference metric from $\omega$ to $\omega_u$, we can assume that $u=0$ and $\omega$ is csck.  As we will see, by reductiveness of $G$ we can find $g\in G$ such that $g.v$ minimizes $\AM_{\omega}$ on the orbit $G.v$. Let $v^0$ be the unique minimizer in $\mathcal{M}^1$ of $\AM_{\omega}$ (which exists by Remark \ref{rem: mimimizers}). The goal is to prove that $v^0=g.v$. 

Let $F_{\lambda}$ be the smooth differential of $E_{\lambda \omega}$. By the choice of $g$, following the arguments in \cite[Proposition 4.3 and Theorem 4.4]{bb}, we will get that there exists $h\in \Cc^{\infty}(X)$ such that for $\lambda \in \mathbb{R}$ small, 
\begin{equation}
	\label{eq: BB 2}
	F_{\lambda}(g.v + \lambda h)=O(\lambda^2).
\end{equation}
 Let $v^{\lambda}\in \Ec^1 \cap \AM^{-1}(0)$ be the unique minimizer of $\Kenergy_{\lambda \omega}$  on  $\Ec^1 \cap \AM^{-1}(0)$, $\lambda >0$.  As shown in Proposition \ref{prop: weak_continuity_prop} $d_1(v^{\lambda},v) \to 0$ as $\lambda\to 0$.  Let $t \to u^{\lambda}_t$ be the finite energy geodesic connecting $u^{\lambda}_0=v^{\lambda}$ and $u^{\lambda}_1= g.v + \lambda h$ (when $\lambda$ is small enough $g.v +\lambda h$ is  a K\"ahler potential). We want to prove that as $\lambda\to 0$ the  limiting geodesic $t \to u_t$ (which is known to exist by the endpoint stability of finite energy geodesics \cite[Proposition 4.3]{bdl}) is trivial, i.e., $v^0 = u_0 = u_1 = g.v$. Using \eqref{eq: BB 2} we will first show that  
 \[
 \left(\frac{d}{dt}\Big|_{t=1^-} - \frac{d}{dt}\Big|_{t=0^+}\right)  \AM_{\omega} (u^{\lambda}_t) \leq C |\lambda|,
 \]
 for a constant $C>0$, independent of $\lambda.$
 
 Using only the convexity of $t \to \AM_{\omega}(u^\lambda_t)$ \cite[Theorem 4.12]{bdl} (after possibly increasing $C$) this implies in an elementary manner that
$$
0\leq t \AM_{\omega} (u^\lambda_1)+ (1-t)\AM_{\omega} (u^\lambda_0)-\AM_{\omega}(u^\lambda_t) \leq t (1-t) C|\lambda| , \ t\in [0,1].
$$
Letting $\lambda \to 0$  we get that $t \to \AM_{\omega}(u_t)$ is linear. Since potentials along $t \to u_t$ have finite entropy, the last statement of \cite[Theorem 4.12]{bdl} implies that $t \to u_t$ has to be a constant curve, i.e., if $w=u_0=u_1=g.v$, finishing the argument.
 
The precise argument will rely heavily on \cite[Proposition 4.3, Theorem 4.4, Proposition 4.7]{bb}. Compared to \cite{bb}, the first main difference in our analysis is that one end point ($t=1$) on our finite energy  geodesic $t \to u^\lambda_t$ is smooth (hence we can handle the derivatives) while the other end point ($t=0$) is apriori singular (in $\Ec^1$).
The second main difference is our use of the twisted K-energy. In \cite[Proposition 4.3, Theorem 4.4, Proposition 4.7]{bb} the perturbation term is given by the $J$ type  functional. For our argument to work, we need strict convexity of $\AM_{\omega}$ along our finite energy geodesic $t \to u_t$, which was proved in \cite[Theorem 4.12 and Theorem 4.13]{bdl}. 
 \medskip
 
 We now explain the  proof of Proposition \ref{prop: AMtwist_minimizer} in detail.

\begin{proof}[Proof of Proposition \ref{prop: AMtwist_minimizer}]
By changing the reference metric from $\omega$ to $\omega_u$, we can assume that $u=0$ and $\omega$ is csck. As a csck metric exists, the group $G$ is reductive, hence there exists $g\in G$ such that 
$$\AM_{\omega}(g.v)=\min \{\AM_{\omega}(f.v) \setdef f\in G\}$$ 
This is indeed well known and can be seen from the fact that $\AM_{\omega}$ is equivalent with the growth of the $d_1$-metric \cite[Proposition 5.5]{DR17}, and the Lie algebra of $G$ has a very specific decomposition (for details see for example Section 6 of \cite{DR17}, especially \cite[Proposition 6.2, Proposition 6.9]{DR17}).

We denote $\tilde v^0=g.v \in \Hnormalize$ and  let $v^0\in \mathcal{M}^1$ be the unique minimizer of $\AM_{\omega}$ on $\mathcal M^1$, known to exist by the previous proposition. We are done if we can show that $v^0=\tilde v^0$. For each $\lambda\in (0,1/2]$ let $v^{\lambda}$ be the unique $\Ec^1$-minimizer of $\Kenergy_{\lambda\omega}$ on $\Ec^1 \cap \AM^{-1}(0)$. Then by Proposition \ref{prop: weak_continuity_prop} above, $d_1(v^\lambda,v^0)\to 0$ as $\lambda \to 0$. Let $F_\lambda$ and $W$ denote the differential of $\Kenergy_{\lambda\omega}|_{\Homega}$ and $n\AM_{\omega}|_{\Homega}$ respectively. 

	Given the specific choice of $\tilde v_0$, by the same argument as in the proof of \cite[Theorem 4.4, Proposition 4.7]{bb} we can find $h\in \Cc^{\infty}(X)$  such that 
	$$
	D_{h} F_0|_{\tilde v^0} = -W(\tilde v^0).
	$$
Again going back to the arguments in \cite[Theorem 4.4, Proposition 4.7]{bb}, for small enough $\lambda \geq 0$ this identity implies
	$$
	|F_\lambda(\tilde v^0+\lambda h).w |\leq C\lambda^2\sup_X |w|, \ \forall w\in \Cc(X).
	$$
Given this last estimate and the explicit formula for $F_\lambda$ \eqref{eq: derivative of K chi} we can further write: 
\begin{equation*}
	F_{\lambda}(\tilde v^0+\lambda h). w = \int_X w f_\lambda \omega_{\tilde v^0 + \lambda h}^n,
\end{equation*}
where  $f_\lambda=\overline{S}_{\lambda\omega}-S_{\omega_{\tilde v^0+ \lambda h}} + \lambda \Tr^{\omega_{\tilde v^0+ \lambda h}}(\omega) \in \Cc^\infty(X)$ satisfies $f_\lambda=O(\lambda^2)$. 

Let $[0,1] \ni t \to u_t^\lambda \in \mathcal E^1$ be the finite energy geodesic connecting $u_0^\lambda:= v^\lambda \in \mathcal E^1$ with  $u^\lambda_1:=\tilde v^0+\lambda h \in \Homega$ for $\lambda$ small enough.
By Lemma \ref{lem:deriv av mab along singular} below we can write:
	$$
	\frac{d}{dt}\Big|_{t=1^-} \Kenergy_{\lambda\omega} (u^\lambda_t) \leq \int_X{\dot u}_1^\lambda f_\lambda \omega_{\tilde v^0 + \lambda h}^n.
	$$
Proposition \ref{prop: weak_continuity_prop} and the fact that $u^\lambda_1=\tilde v^0 + \lambda h$ is smooth gives that the quantities $d_1(0,u^\lambda_1)$ and $d_1(0,u^\lambda_0)$ are uniformly bounded. Consequently, by Lemma \ref{lem: geod_dist_limit}(ii) below we obtain the following estimate:
\begin{flalign*}
\int_X|{\dot u}_1^\lambda | \omega_{u^\lambda_1}^n =d_1(u^\lambda_1,u^\lambda_0) \leq d_1(0,u^\lambda_0) + d_1(0,u^\lambda_1)\leq C.
\end{flalign*} 
Since $f_\lambda = O(\lambda^2)$ we can ultimately write
$$
\frac{d}{dt}\Big|_{t=1^-} \Kenergy_{\lambda\omega} (u_t^\lambda) \leq O(\lambda^{2}). 
$$
Recall that $u^\lambda_0=v^\lambda$ is the unique $\Ec^1$-minimizer of the convex functional $\Kenergy_{\lambda\omega}$, thus $\frac{d}{dt}\big|_{t=1^-}\Kenergy_{\lambda\omega}(u^\lambda_t) \geq \frac{d}{dt}\big|_{t=0^+}\Kenergy_{\lambda\omega}(u^\lambda_t) \geq 0$. Consequently, as both $t\to \AM_\omega(u^\lambda_t)$ and $t \to \Kenergy(u^\lambda_t)$ are convex, we obtain the following sequence of estimates
$$
	0\leq n\lambda \Big(\frac{d}{dt}\Big|_{t=1^-} -\frac{d}{dt}\Big|_{t=0^+}\Big) \AM_{\omega}(u_t^\lambda) \leq \Big(\frac{d}{dt}\Big|_{t=1^-} -\frac{d}{dt}\Big|_{t=0^+} \Big)\Kenergy_{\lambda\omega}(u^\lambda_t) \leq O(\lambda^{2}).  
$$
Using convexity of $t\to \AM_\omega(u^\lambda_t)$ again, this last estimate gives
$$
0\leq t \AM_{\omega} (u^\lambda_1)+ (1-t)\AM_{\omega} (u^\lambda_0)-\AM_{\omega}(u^\lambda_t) \leq t (1-t) O(\lambda) , \ t\in (0,1).
$$
Letting $\lambda \to 0$,  using the endpoint stability of finite energy geodesic segments \cite[Proposition 4.3]{bdl} and the $d_1$-continuity of $\AM_\omega$ \cite[Lemma 5.23]{DR17}, we obtain that  $t \to \AM_{\omega}(u_t)$ is linear along the finite energy geodesic $[0,1] \ni t \to u_t \in \mathcal E^1$ connecting $v^0$ to $\tilde v^0$. By \cite[Theorem 4.12]{bdl} this implies that $v^0=\tilde v^0$, what we desired to prove. 
\end{proof}

As promised in the above argument, we provide the following lemma, which generalizes \cite[Lemma 3.5]{bb} to finite energy geodesics with one smooth endpoint:

\begin{lemma}
\label{lem:deriv av mab along singular}Given $u_{1}\in\mathcal{E}^{1}$ and $u_{0}\in\mathcal{H}_\omega$ let $[0,1] \ni t \to u_{t} \in \mathcal E^1$ be the finite energy geodesic connecting $u_0,u_1$ and $\chi$ is a smooth closed and positive $(1,1)$-form. Then 

\[
\lim_{t\rightarrow0^{+}}\frac{\Kenergy_\chi(u_{t})-\Kenergy_\chi(u_{0})}{t}\geq\int_{X}(\bar S_\chi-S_{\omega_{u_{0}}} + \textup{Tr}^{\omega_{u_0}}\chi)\dot u_0\omega_{u_{0}}^{n},
\]
where $\bar S_\chi = {n}{V}^{-1}\int_X (\Ric \omega - \chi) \wedge \omega^{n-1}$.
\end{lemma}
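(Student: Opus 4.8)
The plan is to establish the asymptotic slope inequality for $\mathcal K_\chi$ along a finite energy geodesic with one smooth endpoint by splitting $\mathcal K_\chi = \mathrm{Ent}(\omega^n,\omega_{u}^n) + \bar S \AM(u) - n\AM_{\Ric\omega}(u) + nJ_\chi(u)$ into its entropy part and its ``pluripotential-theoretic'' part, and handling each separately. The terms $\AM$, $\AM_{\Ric\omega}$ and $J_\chi$ are all of the form $u \mapsto \int_X u\, \mu_j$ for suitable positive combinations of mixed Monge--Amp\`ere-type measures, and since $[0,1]\ni t\to u_t$ is a weak (finite energy) geodesic, these are affine or convex with explicitly computable one-sided derivatives at $t=0^+$; in fact, using the smoothness of $u_0$ and the $C^{1,\bar 1}$-regularity of the geodesic near the smooth endpoint (Chen, B\l ocki, and the subsequent refinements), the derivative at $t=0^+$ of each such term converges to $\int_X \dot u_0\, \mu_j$ evaluated at $u_0$, which is precisely what produces the $\bar S_\chi \AM$-type contribution $\int_X(\bar S_\chi + \mathrm{Tr}^{\omega_{u_0}}\chi)\dot u_0\,\omega_{u_0}^n$ after collecting $\bar S$, the $\Ric\omega$ term, and the $J_\chi$ term.

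The heart of the matter is the entropy term. Here I would follow the strategy of \cite[Lemma 3.5]{bb}: the entropy functional $t\mapsto \mathrm{Ent}(\omega^n,\omega_{u_t}^n)$ is convex along the finite energy geodesic (this is the Berman--Berndtsson / Chen--Li--P\u aun convexity), so its right derivative at $0$ exists in $(-\infty,+\infty]$, and one needs the lower bound $\liminf_{t\to 0^+}\frac1t(\mathrm{Ent}(\omega^n,\omega_{u_t}^n) - \mathrm{Ent}(\omega^n,\omega_{u_0}^n)) \geq -\int_X S_{\omega_{u_0}}\dot u_0\,\omega_{u_0}^n$, since $-\frac{d}{dt}\big|_{0}\mathrm{Ent} = \int_X S_{\omega_{u_0}}\dot u_0 \omega_{u_0}^n - (\text{a term that recombines into }\AM_{\Ric\omega})$; more precisely the scalar curvature appears exactly as the first variation of $\mathrm{Ent}(\omega^n,\omega_u^n) - n\AM_{\Ric\omega}(u)$ at a smooth potential $u_0$, which is $\int_X(\bar S - S_{\omega_{u_0}})\dot u_0\,\omega_{u_0}^n$ up to the normalization conventions. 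The lower bound on the right derivative of the entropy along the weak geodesic is obtained by convexity together with an upper-semicontinuity/Fatou argument as one approaches $t=0$, combined with the fact that at the smooth endpoint $u_0$ the classical first variation formula for the K-energy holds; since $u_0\in\mathcal H_\omega$ is smooth, one does not need to perturb that endpoint, which is why the hypothesis is stated with $u_0$ smooth and $u_1$ merely in $\mathcal E^1$.

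Concretely, the steps in order: (1) write $\mathcal K_\chi(u_t) = \mathcal K(u_t) + nJ_\chi(u_t)$ and recall that each of $t\mapsto\AM(u_t)$, $t\mapsto\AM_{\Ric\omega}(u_t)$, $t\mapsto\AM_\chi(u_t)$, $t\mapsto \mathrm{Ent}(\omega^n,\omega_{u_t}^n)$ is convex (affine in the $\AM$-cases) along the finite energy geodesic, so all one-sided derivatives at $t=0^+$ exist in $(-\infty,+\infty]$; (2) for the affine $\AM$-type terms, compute $\frac{d}{dt}\big|_{0^+}$ explicitly using the smoothness of $u_0$ and the regularity of the geodesic near its smooth endpoint, obtaining $\int_X\dot u_0\,(\cdots)\omega_{u_0}^n$ with the mixed forms replaced at $u_0$; (3) for the entropy term, invoke the convexity plus the classical first variation at the smooth endpoint $u_0$ (exactly as in \cite[Lemma 3.5]{bb}) to get the lower bound $\liminf_{t\to0^+}\frac1t(\mathrm{Ent}(\omega^n,\omega_{u_t}^n)-\mathrm{Ent}(\omega^n,\omega_{u_0}^n))\geq -\int_X S_{\omega_{u_0}}\dot u_0\,\omega_{u_0}^n + \int_X n\,\dot u_0\,\Ric\omega\wedge\omega_{u_0}^{n-1}/V$ type expression; (4) add up, checking that the combination of $\bar S$, $-n\AM_{\Ric\omega}$ and $nJ_\chi$ reorganizes precisely into $\bar S_\chi = nV^{-1}\int_X(\Ric\omega-\chi)\wedge\omega^{n-1}$ and the trace term $\mathrm{Tr}^{\omega_{u_0}}\chi$, using the definition \eqref{def: Jdef}--\eqref{eq: AMdef} of $J_\chi$ and $\AM_\chi$. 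The main obstacle is step (3): justifying the lower bound on the right derivative of the entropy along a merely-$\mathcal E^1$ geodesic, which requires the convexity of entropy along weak geodesics (not elementary) and a careful limiting argument at the smooth endpoint — but this is exactly where we quote and mildly adapt \cite[Lemma 3.5]{bb}, the smoothness of $u_0$ making the adaptation routine rather than the perturbative argument needed when both endpoints are singular.
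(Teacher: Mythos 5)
Your general strategy — convexity of $\mathcal K_\chi$ along finite energy geodesics plus the classical first variation at the smooth endpoint — is in the spirit of the paper, but there is a genuine gap exactly at the step you call ``routine.'' The reference \cite[Lemma 3.5]{bb} is stated for weak $C^{1\bar1}$-geodesics, i.e.\ with both endpoints bounded. Here $u_1\in\mathcal E^1$ may be unbounded, so the geodesic $t\mapsto u_t$ need not be $C^{1\bar1}$ anywhere, not even near the smooth endpoint $u_0$; appealing to ``$C^{1,\bar1}$-regularity near the smooth endpoint'' is not available. The smoothness of $u_0$ does not by itself repair this. Consequently neither your step (2) (computing $\frac{d}{dt}|_{0^+}$ of the $\AM$-type terms as $\int_X\dot u_0\,\mu_j$) nor step (3) (the entropy lower bound) can be justified as stated.

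What the paper actually does is different in two respects and resolves the issue. First, it does not split entropy off from the Monge--Amp\`ere part; it works with the whole functional $\mathcal K_\chi$ at once, and uses convexity only to reduce to proving the secant inequality for each fixed $t\in(0,1]$ rather than in the $t\to0^+$ limit. Second, and crucially, it approximates the \emph{far} point $u_t$ by smooth $u_t^k$ with $d_1(u_t^k,u_t)\to0$ and $\mathcal K_\chi(u_t^k)\to\mathcal K_\chi(u_t)$, applies \cite[Lemma 3.5]{bb} to the weak $C^{1\bar1}$-geodesic from $u_0$ to $u_t^k$ (which is legitimate since both endpoints are now bounded), and passes to the limit $k\to\infty$ in the right-hand integral $\int_X \dot v_0^k\,(\cdots)\,\omega_{u_0}^n$ using dominated convergence. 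Even that last step is nontrivial: it requires knowing that the tangent vectors $\dot v_0^k$ converge pointwise a.e.\ to $\dot u_0$ with a common $L^1(\omega_{u_0}^n)$ dominating function, which is exactly the content of the auxiliary Lemma~\ref{lem: geod_dist_limit}. Your proposal contains no mechanism for this approximation and limiting argument, which is the technical heart of the proof; without it, the inequality along a genuine $\mathcal E^1$-geodesic is not established.
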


\begin{proof} Using Theorem \cite[Theorem 1.2]{bdl} it is enough to show that
\begin{equation}\label{eq: toprove}
\frac{\Kenergy_\chi(u_{t})-\Kenergy_\chi(u_{0})}{t}\geq\int_{X}(\bar S_\chi -S_{\omega_{u_{0}}} + \textup{Tr}^{\omega_{u_0}}\chi)\dot u_0\omega_{u_{0}}^{n}, \ t \in [0,1].
\end{equation}
Fix $t\in [0,1]$. By \cite[Theorem 1.2]{bdl} there exists  $u^k_t \in \Homega$ such that $d_1(u^k_t,u_t) \to 0$ and $\Kenergy_\chi(u_t^k) \to \Kenergy_\chi(u_t)$. Let $[0,t] \ni l \to v^{k}_l \in \mathcal E^1$ be the weak $C^{1\bar1}$ geodesic connecting $u^k_0:=u_0$ with $u^k_t$. By \cite[Lemma 3.5]{bb} we can write:
$$\frac{\Kenergy_\chi(u^k_{t})-\Kenergy_\chi(u_{0})}{t}\geq\int_{X}(\bar S_\chi-S_{\omega_{u_{0}}} + \textup{Tr}^{\omega_{u_0}}\chi)\dot v_0^{k}\omega_{u_{0}}^{n}.$$
By the next lemma, after perhaps passing to a subsequence, we can apply the dominated convergence theorem on the right hand side and obtain \eqref{eq: toprove}. 
\end{proof}

\begin{lemma}\label{lem: geod_dist_limit}Suppose $u^j_1,u_1 \in \mathcal E^1$ satisfies $d_1(u^j_1,u_1) \to 0$ and $u_0 \in \Homega$.  Let $[0,1] \ni t \to u_t,u^j_t \in \mathcal E^1$ be the finite energy geodesics connecting  $u_0,u_1$ and $u_0,u^j_1$ respectively. Then the following hold:\\
\noindent (i) There exists $f \in L^1({\omega_{u_0}^n}), \ {j_k} \to \infty$ such that $|\dot u^{j_k}_0| \leq f$ and $\dot u^{j_k}_0 \to \dot u_0$ a.e. \\
\noindent (ii) $d_1(u_0,u_1)=\int_X |\dot u_0| \omega_{u_0}^n.$
\end{lemma}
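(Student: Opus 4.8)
# Proof Proposal for Lemma \ref{lem: geod_dist_limit}

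The plan is to exploit the fact that a finite energy geodesic emanating from a \emph{smooth} potential $u_0 \in \mathcal{H}_\omega$ has good a priori control: its Legendre-type subdifferential at $t=0$ is bounded in terms of $d_1$-distance to the other endpoint, and the distance functional in $\mathcal{E}^1$ has the ``linear'' expression along geodesics familiar from the metric geometry of \cite{da2}. I would first establish (ii) and then bootstrap (i) from the convergence $d_1(u_1^j,u_1)\to 0$.

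\textbf{Step 1: the distance formula (ii).} For a finite energy geodesic $[0,1]\ni t\to u_t$ the $d_1$-length equals $d_1(u_0,u_1)$, and since $t\mapsto u_t$ is a $d_1$-geodesic, $d_1(u_0,u_t)=t\,d_1(u_0,u_1)$. One then differentiates at $t=0^+$: by the first-variation formula for $d_1$ along finite energy geodesics (this is precisely the content of the $L^1$-Mabuchi theory, see \cite{da2} and its use in \cite{bdl,dr}), $\frac{d}{dt}\big|_{t=0^+}d_1(u_0,u_t)=\int_X|\dot u_0|\,\omega_{u_0}^n$ whenever $u_0$ is smooth (smoothness of $u_0$ guarantees $\omega_{u_0}^n$ is a smooth volume form and that $\dot u_0$ is a well-defined bounded function, since a geodesic from a smooth potential is $C^{1,1}$ near that endpoint in an appropriate sense; alternatively one invokes $d_1(u_0,u_t)=\int_X |u_t-u_0|\big(\text{convex combination of }\omega_{u_0}^n,\omega_{u_t}^n\big)$-type comparison and lets $t\to 0$). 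Combining, $d_1(u_0,u_1)=\int_X|\dot u_0|\,\omega_{u_0}^n$.

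\textbf{Step 2: uniform integrability and a.e.\ convergence (i).} Since $d_1(u_1^j,u_1)\to 0$, the numbers $d_1(u_0,u_1^j)$ are uniformly bounded, so by (ii) the functions $\dot u_0^j$ satisfy $\int_X|\dot u_0^j|\,\omega_{u_0}^n = d_1(u_0,u_1^j)\le C$. To upgrade this $L^1(\omega_{u_0}^n)$-bound to a dominated a.e.-convergent subsequence I would use the comparison principle for the geodesic together with the known estimate $\sup_X|\dot u_0^j| \le \sup_X|u_1^j-u_0|$ (the time-derivative at an endpoint of a geodesic is controlled by the sup of the endpoint difference — a standard maximum-principle fact for the homogeneous complex Monge--Ampère equation, used e.g.\ in \cite{bb}). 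If $u_1^j \to u_1$ in $d_1$ one does not immediately get $L^\infty$-convergence, so instead I would extract a subsequence along which $u_1^{j_k}\to u_1$ in capacity (hence $\omega_{u_0}^n$-a.e., as $\omega_{u_0}^n$ is smooth) and is dominated: passing to a further subsequence so that $d_1(u_1^{j_k},u_1)\le 2^{-k}$, one gets $\sum_k \int_X |u_1^{j_k}-u_1^{j_{k+1}}|\,\omega^n<\infty$ and a dominating function $f_0 := |u_1|+\sum_k|u_1^{j_k}-u_1^{j_{k+1}}| \in L^1$, whence $|u_1^{j_k}-u_0|\le f_0 + |u_0| =: g \in L^1(\omega_{u_0}^n)$; then $f:=g$ (or $\sup_X|{\cdot}|$ of it, which is finite since $u_0$ smooth — more carefully, one localizes and uses that the geodesic from $u_0$ depends continuously on the endpoint) dominates $|\dot u_0^{j_k}|$. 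The a.e.\ convergence $\dot u_0^{j_k}\to\dot u_0$ then follows from stability of finite energy geodesics under $d_1$-convergence of endpoints (\cite[Proposition 4.3]{bdl}), which gives $u_t^{j_k}\to u_t$ locally uniformly on $X\times(0,1)$ and hence, by convexity in $t$ and monotone/Dini-type arguments, $\partial_t u_t^{j_k}\big|_{t=0}\to \partial_t u_t\big|_{t=0}$ a.e.

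\textbf{Main obstacle.} The delicate point is part (i): making precise the domination of $\dot u_0^{j_k}$ by a single $L^1(\omega_{u_0}^n)$ function and the a.e.\ convergence of the endpoint derivatives. The subtlety is that $d_1$-convergence of endpoints is weak, so one genuinely needs to pass to a carefully chosen subsequence and combine three ingredients: (a) the maximum-principle bound $|\dot u_0^j|\lesssim \operatorname{osc}_X(u_1^j-u_0)$ localized near the smooth endpoint $u_0$; (b) the endpoint-stability of geodesics from \cite[Proposition 4.3]{bdl} to get pointwise convergence of the geodesics and, via convexity in $t$, of their one-sided $t$-derivatives at $0$; and (c) a Borel--Cantelli-style choice of subsequence to produce the integrable dominating function from the $d_1$-Cauchy tail. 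Everything else (Step 1, and the use of dominated convergence in Lemma \ref{lem:deriv av mab along singular}) is then routine.
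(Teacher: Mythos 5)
Your proposal has two genuine gaps, both stemming from treating the finite energy endpoint $u_1 \in \mathcal{E}^1$ as if it were bounded.

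\textbf{Step 1 is circular and invokes a false regularity claim.} You assert that $\dot u_0$ is ``a well-defined bounded function, since a geodesic from a smooth potential is $C^{1,1}$ near that endpoint.'' This is not true when the other endpoint $u_1$ is an unbounded element of $\mathcal{E}^1$: the Chen-type $C^{1,1}$ estimate requires both boundary potentials to be bounded, and $\dot u_0$ does genuinely become unbounded (it ``sees'' the growth of $u_1$). Consequently, the ``first-variation formula'' $d_1(u_0,u_1)=\int_X |\dot u_0|\,\omega_{u_0}^n$ is not available as a known black box from \cite{da2}; what \cite[Theorem 1]{da2} gives is the analogous identity only along weak $C^{1\bar1}$ geodesics with bounded endpoints that are moreover monotone in $t$. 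Extending it to an arbitrary finite energy geodesic from a smooth $u_0$ is precisely the content of part (ii), so quoting it is assuming what must be proved.

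\textbf{Step 2 uses an estimate that fails in $\mathcal{E}^1$.} The bound $\sup_X |\dot u_0^j| \lesssim \operatorname{osc}_X(u_1^j - u_0)$ has an infinite right-hand side when $u_1^j$ is unbounded, so it provides no control. The subsequent attempt to build a dominating function of the form $f:=g$ or ``$\sup_X$ of it'' from an $L^1$ bound is incoherent for the same reason. The ingredient you are missing is the \emph{monotone sandwiching} construction: the paper first reduces (by adding constants, which shifts a geodesic affinely in $t$) to the case $u_0 - 1 \geq u_1, u_1^j$, proves (ii) in this monotone setting by approximating $u_1$ from above by smooth $\tilde u_1^j$ and applying \cite[Theorem 1]{da2} plus monotone convergence, and then, for (i), invokes \cite[Proposition 2.6]{bdl} to sandwich $u_1^{j_k}$ between an increasing sequence $v_1^{j_k}$ and a decreasing sequence $w_1^{j_k}$ in $\mathcal{E}^1$, both $d_1$-converging to $u_1$. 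The comparison principle for geodesics yields $\dot v_0^{j_k} \leq \dot u_0^{j_k} \leq \dot w_0^{j_k} \leq 0$; the monotone limits $\dot v_0$ and $\dot w_0$ are then shown to have equal integral (both equal to $-d_1(u_0,u_1)$ by the already-established monotone case of (ii)), forcing $\dot v_0 = \dot w_0 = \dot u_0$ a.e., and $f:=|\dot v_0^{j_1}|$ is the desired $L^1$ dominating function. This squeeze argument replaces, and cannot be replaced by, the maximum-principle bound you propose. Your instinct to use \cite[Proposition 4.3]{bdl} for endpoint stability is not wrong, but it provides only $d_1$-convergence of $u_t^{j_k}\to u_t$ for each $t$, which is too weak to yield a.e.\ convergence of the endpoint time-derivatives without the monotone sandwich.
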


\begin{proof} If $[0,1] \ni t \to v_t \in \mathcal E^1$ is an arbitrary finite energy geodesic, we observe that $t\to v_t + t \alpha  + (1-t)\beta$ is the finite energy geodesic connecting $v_0 +\beta$ and $v_1+\alpha$, $\alpha,\beta \in \Bbb R$. Using this observation, to establish (i) we can assume without loss of generality that 
\begin{equation}\label{eq: monotone_assumption}
u_0-1 \geq u_1,u^j_1. 
\end{equation}
We first show that (ii) holds in this particular case:
\begin{equation}\label{eq: distgeodeq} 
d_1(u_0,u_1) = \int_X -\dot u_0 \omega_{u_0}^n.
\end{equation}
Indeed, let $\tilde u^j_1 \in \Homega$ be a sequence decreasing to $u_1$ with $\tilde u^j_1 \leq u_0$. By \cite[Theorem 1]{da2}, since $t \to \tilde u^j_t$ is monotone decreasing we have $d_1(u_0,\tilde u^j_1)=\int_X -\dot {\tilde u}^j_0 \omega_{u_0}^n$, where $t \to \tilde u_t^j$ is the weak $C^{1\bar 1}$ geodesic connecting $u_0,\tilde u^j_1$, which is decreasing in $t$. As $\tilde u_0^j = u_0$ and $u^j_t\searrow u_t$, \eqref{eq: distgeodeq} follows from the monotone convergence theorem.

By \cite[Proposition 2.6]{bdl} there exists $j_k \to \infty$, $v^{j_k}_1 \in \mathcal E^1$ increasing and $w^{j_k}_1 \in \mathcal E^1$ decreasing such that $v^{j_k}_1 \leq u^{j_k}_1 \leq w^{j_k}_1 \leq u_0$ and  $d_1(u_1,v^{j_k}_1),d_1(u_1,w^{j_k}_1) \to 0$. Let $[0,1] \ni t \to v^{j_k}_t,w^{j_k}_t \in \mathcal E^1$ be the finite energy geodesics connecting $u_0, v^{j_k}_1$ and $u_0,w^{j_k}_1$ respectively. By the comparison principle for finite energy geodesic segments we ultimately get $\dot v^{j_k}_0 \leq \dot u^{j_k}_0 \leq \dot w^{j_k}_0 \leq 0$. We claim that for the monotone limits $\dot v_0 := \lim_k \dot v^{j_k}_0$ and $\dot w_0:=\lim_k \dot w^{j_k}_0$ we have $\dot v_0=\dot w_0=\dot u_0$ a.e., with this showing that $\dot u^{j_k}_0 \to \dot u_0$ a.e. as $k \to \infty$. Indeed, by \eqref{eq: distgeodeq} we have $d_1(u_0,w^{j_k}_1)= \int_X - \dot w^{j_k}_0 \omega_{u_0}^n$ and $d_1(u_0,v^{j_k}_1)= \int_X - \dot v^{j_k}_0 \omega_{u_0}^n$. Applying the monotone/dominated convergence theorems, we can write $d_1(u_0,u_1)=\int_X - \dot v_0 \omega_{u_0}^n=\int_X - \dot w_0 \omega_{u_0}^n.$
Since $\dot v_0 \leq \dot u_0 \leq \dot w_0$, it follows that $\dot v_0=\dot w_0=\dot u_0$ a.e. with respect to $\omega_{u_0}^n$, as we claimed.

Finally, as $\dot v^{j_1}_0 \leq \dot u^{j_k}_0 \leq 0$, using \eqref{eq: distgeodeq} we conclude that the function $f =|\dot v^{j_1}_0|$ satisfies the requirements of (i).

To argue (ii), let $\tilde u^j_1 \in \Homega$ be the same decreasing approximating sequence from the beginning of the proof. As \eqref{eq: monotone_assumption} may not hold, by \cite[Theorem 1]{da2} we only have $d_1(u_0,\tilde u^j_1)=\int_X |\dot {\tilde u}^j_0|\omega_{u_0}^n$. By (i), after perhaps passing to a subsequence, we can use the dominated convergence theorem to finish the proof.
\end{proof}

\begin{remark}In the proof of Lemma \ref{lem:deriv av mab along singular} above, by using the Ricci flow techniques of \cite{gz2,DL}, it is even possible to approximate $u_t$ by a decreasing sequence of smooth potentials with convergent K-energy.
\end{remark}

\begin{remark} Propositions \ref{prop: weak_continuity_prop}, \ref{prop: AMtwist_minimizer} together imply that whenever a csck potential $v \in \Hnormalize$ exists, then every ``finite energy continuity path" $[0,\infty) \ni \lambda \to v^\lambda \in \mathcal E^1\cap \AM^{-1}(0)$ $d_1$-converges to $g.v$ for some $g \in G$, with the crucial uniform estimate \eqref{eq:Iestimate}. Though we will not need it in this work, it is worth noting  that (using the implicit function theorem and additional estimates) in \cite[Theorem 1.1]{cpz} it is shown that $v^\lambda \in \Homega$ for small enough $\lambda$, and in fact $v^\lambda \to_{C^\infty} g.v$.
\end{remark}

Finally, we address the last auxiliary result in the proof of Theorem \ref{thm: reguarity theorem}:

\begin{lemma}\label{lem: autlimit} Suppose $\Homega$ contains a csck potential. If $u \in \Hnormalize$ and $g_j \in G$ are such that $d_1(g_j. u,h) \to 0$ for some $h \in \mathcal E^1$ as $j \to \infty$, then there exists $g \in G$ such that $g.u=h$.
\end{lemma}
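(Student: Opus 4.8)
The plan is to exploit the fact that $G = \mathrm{Aut}_0(X,J)$ acts on $\mathcal{H}$ (and on $\mathcal{E}^1$) by $d_1$-isometries, together with the reductivity of $G$ coming from the existence of a cscK metric, to extract a convergent subgroup element. Write each $g_j = \exp(\xi_j)\cdot k_j$ (or use the $KAK$-type decomposition of $G$ associated to a maximal compact subgroup, as in \cite[Section 6]{dr}); since $d_1(g_j.u, h)$ is bounded, and $d_1$-distance dominates $J_\omega$ which in turn controls the ``size'' of the automorphism part (\cite[Proposition 5.5, Proposition 6.2, Proposition 6.9]{dr}), the relevant sequence in $G$ is bounded. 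By properness of the exponential-type parametrization on the reductive group $G$, after passing to a subsequence we get $g_j \to g$ in $G$. Then $g_j.u \to g.u$ in $C^\infty$, hence in $d_1$, so $g.u = h$ by uniqueness of $d_1$-limits.

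Concretely, the key steps in order are: \emph{(i)} Record that $G$ acts on $(\mathcal{E}^1, d_1)$ by isometries, so $d_1(g_j.u, h) \to 0$ implies $\{d_1(0, g_j.u)\}_j$ is bounded, hence $\{J_\omega(g_j.u)\}_j$ is bounded by \cite[Proposition 5.5]{dr}. \emph{(ii)} Invoke reductivity of $G$ (Matsushima--Lichnerowicz, valid since a cscK metric exists) and the structure of its Lie algebra from \cite[Section 6]{dr}: decompose $g_j$ in a way that separates the ``compact'' directions, along which $J_\omega(g_j.u)$ is bounded trivially, from the ``non-compact'' directions, along which $J_\omega(g_j.u) \to \infty$ unless that part stays in a compact set. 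Conclude $\{g_j\}_j$ is precompact in $G$. \emph{(iii)} Pass to a subsequence $g_{j_k} \to g \in G$; since the $G$-action on $\mathcal{H}_\omega$ is smooth and $u$ is smooth, $g_{j_k}.u \to g.u$ in $C^\infty(X)$, and a fortiori $d_1(g_{j_k}.u, g.u) \to 0$ (using, e.g., that $d_1$ is dominated by a $C^0$-type quantity on $\mathcal{H}_\omega$). \emph{(iv)} By uniqueness of limits in the metric space $(\mathcal{E}^1, d_1)$, $g.u = h$.

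The main obstacle is step \emph{(ii)}: showing that the $d_1$-boundedness of the orbit sequence $g_j.u$ forces $\{g_j\}$ itself to be precompact in $G$ (the orbit map $g \mapsto g.u$ need not be proper on a general group, but it is when $G$ is reductive and one quotients by the stabilizer of the cscK metric, which is compact). This is exactly where the hypothesis that a cscK metric exists is essential — without it $G$ need not be reductive and the orbit could ``escape to infinity'' while staying $d_1$-bounded in a degenerate way. I would handle this by fixing the cscK potential $v$, noting its stabilizer in $G$ is compact, writing $G = G_v \cdot \exp(\mathfrak{p})$ for the appropriate decomposition, and observing that $J_\omega(\exp(t\xi).v)$ grows (coercively, in the relevant sense) in $t$ for $\xi$ in the noncompact part, as is used implicitly already in the proof of Proposition \ref{prop: AMtwist_minimizer}; comparing $g_j.u$ with $g_j.v$ via the $d_1$-Lipschitz dependence on the potential then transfers this growth to rule out escape.
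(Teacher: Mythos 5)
Your proposal is correct and follows essentially the same route as the paper: the authors also fix the cscK potential $v$, write $g_j=k_j\exp_I(JX_j)$ with $k_j\in\textup{Isom}_0(X,\omega_v)$ and $X_j$ a Hamiltonian Killing field (via \cite[Propositions 6.2 and 6.9]{dr}), use the isometric action and the triangle inequality to transfer the $d_1$-bound from the orbit of $u$ to the orbit of $v$, bound $\|X_j\|$ via the linear $d_1$-growth of $t\mapsto\exp_I(-tJX_j).v$ (a $d_1$-geodesic ray, by reductivity), and then extract a smoothly convergent subsequence $g_j\to g$, concluding $g.u=h$ by uniqueness of $d_1$-limits. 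No substantive differences.
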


\begin{proof} Let $v \in \Hnormalize$ be a csck potential. By \cite[Propositions 6.2 and 6.9]{DR17} there exists $k_j \in \textup{Isom}_0(X,\omega_v)$ and a Hamiltonian vector field $X_j \in \textup{isom}(X,\omega_v)$ such that $g_j = k_j \textup{exp}_I{JX_j}$. It is clear from the definition of the action of $G$ on the level of potentials that $k_j.v=v$. Thus we can write
\begin{flalign*}
d_1(v,g_j.u)&=d_1(v,k_j \textup{exp}_I(JX_j).u)=d_1(k_j^{-1}v, \textup{exp}_I(JX_j).u)=d_1(v, \textup{exp}_I(JX_j).u)\\
&=d_1(\textup{exp}_I(-JX_j)v,u) \geq d_1(\textup{exp}_I(-JX_j)v,v)-d_1(v,u),
\end{flalign*}
giving that $d_1(\textup{exp}_I(-JX_j)v,v)$ is bounded independently of $j$. As shown in see \cite[Section 7.1]{DR17} the curve $[0,\infty) \ni t \to \textup{exp}_I(-tJX_j).v \in \Homega\cap \AM^{-1}(0)$ is a $d_1$-geodesic ray, hence $\| X_j\|$ has to be uniformly bounded in $\textup{isom}(X,\omega_v)$. By compactness, after possibly relabeling the sequences, we can choose $X_{\infty} \in \textup{isom}(X,\omega_v)$ and $k \in \textup{Isom}_0(X,\omega_v)$ such that $k_j \to k$ and $X_j \to X_{\infty}$ smoothly, hence also $g_j =k_j \textup{exp}_I(JX_j) \to g: = k \textup{exp}_I(JX_{\infty})$ smoothly. In particular this implies $d_1(g_j.u,g.u) \to 0$, hence $g.u=h$ by the non-degeneracy of $d_1$.
\end{proof}

\section{K-polystability as a consequence of properness}

As mentioned in the introduction, in our proof of Theorem \ref{thm: Kpolystable} we will use a geometric reasoning involving geodesic rays.  G. Tian has informed us that the original ideas from \cite{t2} can also be generalized to the case when $G$ is non-trivial and the central fiber of a test configuration is non-normal (
via a  Moser iteration argument).

Before getting into exact details, first we outline our argument. In case a csck metric exists, from Theorem \ref{thm: DRthm} it trivially follows that $\Kenergy$ is bounded from below, hence $(X,\omega)$ is K-semistable. To prove K-stability, one has to show that test configurations $(\mathcal X,\mathcal L,\pi,\rho)$ with zero Donaldson-Futaki invariant $(DF(\mathcal X,\mathcal L)=0)$ are product test-configurations induced by a holomorphic vector field of $(X,L)$. By an estimate of the first named author (recalled \eqref{eq: geodtestconf_ineq}) and properness of the K-energy, we obtain that for test configurations satisfying $DF(\mathcal X,\mathcal L)=0$, the associated Phong-Sturm ray $t \to \phi_t$  satisfies a vital estimate when composed with $J_\omega$ (see \eqref{eq: J_0_geod_est}). Using this estimate we show that $t \to \phi_t$ is induced by the action of a Hamilton vector field of $X$ (Lemma \ref{lem: JG_bounded_ray}). Lastly, by a result of the first author (recalled in Proposition \ref{prop: Berman_prop}) it follows that $(\mathcal X,\mathcal L,\pi,\rho)$ is induced by a vector field of $(X,L)$. 

To give the precise argument, let us first fix some terminology. Let $L \to X$ be an ample line bundle over a K\"ahler manifold $(X,\omega)$ such that $c_1(L)=[\omega]$.  A test configuration $(\mathcal L,\mathcal X,\pi, \rho)$ for $(X,L)$ consists of a scheme $\mathcal X$ with a $\Bbb C^*$-equivariant flat surjective morphism $\pi: \mathcal X \to \Bbb C$ and a relatively ample line bundle $\mathcal L \to \mathcal X$ with a $\Bbb C^*$-action $\tau \to \rho_\tau$ on $\mathcal L$ such that $(X_1,\mathcal L|_{X_1}) = (X,kL)$ for some $k>1$. Without loss of generality we can assume that $k=1$, by treating  $\mathcal L$ as a $\Bbb Q$-line bundle. Following the findings of \cite{lx}, we will always assume that $\mathcal X$ is normal, which automatically makes the projection $\pi$ flat.

Given any test configuration $(\mathcal L,\mathcal X,\pi, \rho)$, after raising $\mathcal L$ to a sufficiently high power, it is possible to find an equivariant embedding into $\Bbb C \Bbb P^N \times \Bbb C$, such that $\mathcal L$ becomes the pullback of the relative $\mathcal O(1)$-hyperplane bundle (see  \cite{do1,thomas,ps1}). 
This automatically allows to fix a semi-positive smooth ``background" metric $h$ on $\mathcal L$, that is positive on every $X_\tau$ slice and is $S^1$-invariant. For the restrictions we introduce the notation $h_\tau = h|_{X_\tau}$, $\tau \in \Bbb C$.%

Any other positive metric $\tilde h$ on $\mathcal L$ can be uniquely represented by a potential $u^{\tilde h, \mathcal X} \in \textup{PSH}(\mathcal X, \Theta(h))$ using the identification 
$$\tilde h = h e^{-u^{\tilde h, \mathcal X}}.$$ 
Additionally, one can associate to $\tilde h$ another potential $u^{\tilde h, \Bbb C^*} \in \textup{PSH}(\Bbb C^*\times X,\textup{pr}_2^*\Theta(h_1))$ using the identification 
\begin{equation}\label{eq: testconf_to_subgeod}
\rho_{\tau}^*\tilde h\big |_{X_1} = h_1 e^{-u^{\tilde h,\Bbb C^*}_\tau}, \ \tau \in \Bbb C^*.
\end{equation}

By analyzing the action of $\rho$ restricted to global sections of $\mathcal L^r, \ r \geq 1$ on $X_0$, we can associate to $(\mathcal X,\mathcal L, \pi, \rho)$ the Donalson--Futaki invariant $DF(\mathcal X,\mathcal L)$. For details we refer to \cite{sz2,thomas}. We say that $(X,L)$ is K-polystable if for any test configuration $(\mathcal X,\mathcal L, \pi, \rho)$ we have $DF(\mathcal X,\mathcal L)\geq 0$, with $DF(\mathcal X,\mathcal L)= 0$ if and only if $\mathcal X$ is a product.

Let us fix $\phi \in \textup{PSH}(X,\Theta(h_1))$. According to Phong--Sturm \cite{ps1,ps2} (see also \cite[Section 2.4]{b}), to $(\mathcal X,\mathcal L,\pi,\rho)$ one can also associate a bounded geodesic ray $[0,\infty) \ni t \to \phi_t \in \textup{PSH}(X,\Theta(h_1)) \cap L^\infty$ (with $\phi_0 =\phi$) by first constructing a metric $\tilde h := he^{-\phi^{\mathcal X}}$ on $\mathcal L$, using the following upper envelope:
$$\phi^{\mathcal X} = \sup \{v \in \textup{PSH}(\mathcal X|_{\overline{\Delta}},\Theta(h)), v_\tau \leq \rho_{\tau^{-1}}^* \phi, \ |\tau|=1 \}.$$
The envelope $\phi^{\mathcal X}$ is seen to be $S^1$-invariant, and one can introduce $\phi_t = \phi^{\tilde h, \Bbb C^*}_{e^{-t/2}} \in \textup{PSH}(X,\omega) \cap L^\infty(X)$ for any $t \in [0,\infty)$. As argued in \cite{ps1,ps2}, this last curve $t \to \phi_t$ is indeed a weak $C^{1\bar1}$-geodesic ray. In general, $t \to \phi_t$ is not normalized, i.e., $\AM(\phi_t)$ is not identically zero (as this depends on the  $\Bbb C^*$-action). As follows from the proof of \cite[Proposition 2.7]{b} (see specifically the  argument that gives (2.16),(2.17)), there exists $C:=C(\phi,\mathcal L,\mathcal X,h)>0$ such that 
\begin{equation}\label{eq: geodtestconf_ineq}
h e^{-C} \leq h e^{-\phi^{\mathcal X}} \leq h e^C.
\end{equation}

\begin{proof}[Proof of Theorem \ref{thm: Kpolystable}]  Let $(\mathcal X, \mathcal L,\pi,\rho)$ be a test configuration equivariantly embedded into $\Bbb C \Bbb P^N \times \Bbb C$ with a $\Bbb C^*$-action $\Bbb C^* \ni \tau \to \rho_\tau \in GL(N+1,\Bbb C)$. By possibly composing $\rho_\tau$ with an inner automorphism, we can assume that the $S^1$-invariant background metric is just the restriction of the relative Fubini-Study metric $h^{FS}$ on $\mathcal O(1) \to \Bbb C \Bbb P^N \times \Bbb C$. For the background K\"ahler metric on $X$ we choose $\omega:= \Theta(h_1^{FS})$.  

We will prove that $DF(\mathcal X,\mathcal L) \geq 0$ with $DF(\mathcal X,\mathcal L) = 0$ if and only if $(\mathcal X, \mathcal L,\pi,\rho)$ is a product test configuration. 

We will be relying on the following formula relating the Donaldson-Futaki invariant to the asymptotics of the K-energy \cite{pt,prs,ti4,bhj,SD}:

\begin{equation}\label{eq: Tian_BHJ}
\Kenergy(u^{h^{FS},\Bbb C^*}_{\tau})  = -(DF(\mathcal X,\mathcal L) - a(\mathcal X,\mathcal L))\log |\tau|^2 + O(1), \ \tau \in \Bbb C^*,
\end{equation}
where $ a(\mathcal X,\mathcal L)\geq 0$, and $ a(\mathcal X,\mathcal L)= 0$ precisely when the central fiber $X_0$ is reduced (recall the notation introduced in \eqref{eq: testconf_to_subgeod} above). From Theorem \ref{thm: TianCor1} it follows that $\Kenergy$ is bounded from below, hence $DF(\mathcal X,\mathcal L) - a(\mathcal X,\mathcal L) \geq 0$, giving that $DF(\mathcal X,\mathcal L)\geq 0$.

Now assume that $DF(\mathcal X,\mathcal L)=0$. To finish the proof we will argue that $(\mathcal X, \mathcal L,\pi,\rho)$ is a product test configuration. Let $\phi \in \Hnormalize$ be a csck potential (recall that $\Theta(h_1)=\omega$ by choice) and let $[0,\infty) \ni t \to \phi_t \in \mathcal E^1$ be the associated $C^{1\bar 1}$-geodesic ray with $\phi_0 =\phi$.

First notice that \eqref{eq: K_is_J_Gproper} and Theorem \ref{thm: TianCor1} gives 
\[
\inf_{g \in G}J_{\omega}(g.(u^{h^{FS},\Bbb C^*}_{e^{-t/2}}-\AM(u^{h^{FS},\Bbb C^*}_{e^{-t/2}}))) < C'.
\]
Pulling back the estimates of \eqref{eq: geodtestconf_ineq} by $\rho_\tau$ and taking the $\log$, we immediately obtain  
\[
u^{h^{FS},\Bbb C^*}_{e^{-t/2}}-C \leq \phi_t \leq u^{h^{FS},\Bbb C^*}_{e^{-t/2}}+C.
\]
Using monotonicity of $\AM$, this further implies that 
\[
\phi_t - \AM(\phi_t) - 2C\leq u^{h^{FS},\Bbb C^*}_{e^{-t/2}}-\AM(u^{h^{FS},\Bbb C^*}_{e^{-t/2}}) \leq \phi_t - \AM(\phi_t) + 2C.
\]
Putting the above facts together and using also the monotonicity of $\AM$, after possibly increasing $C'$, we arrive at:
\begin{equation}\label{eq: J_0_geod_est}
\inf_{g \in G}J_{\omega}(g.(\phi_t-\AM(\phi_t))) < C'.
\end{equation}
Given that $\phi_0$ is a csck potential, Lemma \ref{lem: JG_bounded_ray} below implies that the normalized ray $t \to \phi_t-\AM(\phi_t)$ is induced by $t \to \textup{exp}_{I}(tJV)$, where $V$ is a real holomorphic Hamiltonian Killing field of $(X,J,\omega)$. By Lemma  \ref{lem: vectorlift}, it is even possible to find a lift $\tilde V$  to $L \to X$ such that  
$$\textup{exp}_I(tJ\tilde V)^* h_1^{FS}e^{-\phi_0} =h_1^{FS}e^{-\phi_t}.$$
Since $DF(\mathcal X,\mathcal L)=0$, \eqref{eq: Tian_BHJ} gives that $a(\mathcal X,\mathcal L)=0$, hence $X_0$ is reduced. Consequently, we can apply Proposition \ref{prop: Berman_prop} below to conclude that $\mathcal X$ is isomorphic to $X \times \Bbb C$, i.e., $\mathcal X$ is a product test configuration.
\end{proof}

We next turn to the statements and proofs of the auxiliary results invoked above.

\begin{lemma}\label{lem: JG_bounded_ray} Suppose $(X,\omega)$ is a K\"ahler manifold. Let $u_0 \in \Hnormalize$ be a csck potential and a finite energy geodesic ray $[0,\infty) \ni t \to u_t \in \mathcal E^1 \cap \AM^{-1}(0)$ emanating from $u_0$. If there exists $C>0$ such that
$$\inf_{g \in G}J_\omega(g.u_t) < C, \ t \in [0,\infty),$$
then there exists a real holomorphic Hamiltonian vector field $V \in \textup{isom}(X,\omega_{u_0})$ such that $u_t = \textup{exp}_I(tJV). u_0$, where $ t \to \textup{exp}_I(tJV)$ is the flow of $JV$.
\end{lemma}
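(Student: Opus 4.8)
The plan is to prove that the hypothesis forces the ray $t\mapsto u_t$ to stay inside the minimizer set $\mathcal M^1$, and then to identify it using that $\mathcal M^1$ is a single orbit of the reductive group $G$.

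First I would record the relevant structure. Since $u_0$ is a smooth cscK potential, $G=\textup{Aut}_0(X,J)$ is reductive, $K:=\textup{Isom}_0(X,\omega_{u_0})$ has Lie algebra $\mathfrak k:=\textup{isom}(X,\omega_{u_0})$, and $K$ fixes $u_0$ (its elements pull $\omega_{u_0}$ back to itself). By Theorem~\ref{thm: reguarity theorem} every element of $\mathcal M^1$ is a cscK potential, so the uniqueness result \cite{bb} gives $\mathcal M^1=G.u_0$; in particular $u_0\in\mathcal M^1$. As the minimizer set of the $d_1$-lsc convex functional $\mathcal K$, the set $\mathcal M^1$ is $d_1$-closed and geodesically convex (the geodesic completeness recorded before Proposition~\ref{prop: weak_continuity_prop}: if $\mathcal K$ is constant, hence minimal, at the endpoints of a finite energy geodesic, then it is minimal along it). Finally the hypothesis becomes metric: picking $g_t\in G$ with $J_\omega(g_t.u_t)<C$, \cite[Proposition 5.5]{dr} gives $d_1(0,g_t.u_t)\le C_1$ with $C_1$ independent of $t$, and since $G$ acts on $\mathcal E^1\cap\AM^{-1}(0)$ by $d_1$-isometries with $g_t^{-1}.u_0\in G.u_0=\mathcal M^1$,
$$d_1(u_t,\mathcal M^1)\le d_1(u_t,g_t^{-1}.u_0)=d_1(g_t.u_t,u_0)\le d_1(0,g_t.u_t)+d_1(0,u_0)\le C_1+d_1(0,u_0)=:C',\qquad t\ge 0.$$

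Next I would upgrade this to $u_t\in\mathcal M^1$ for every $t$, the key input being convexity of $d_1$ along pairs of finite energy geodesics \cite{da2}. Fix $s_0>0$; for $t>s_0$ pick $p_t\in\mathcal M^1$ with $d_1(u_t,p_t)\le C'+1$ and let $s\mapsto q^t_s$, $s\in[0,1]$, be the finite energy geodesic from $u_0$ to $p_t$, which lies in $\mathcal M^1$ by geodesic convexity. Comparing it with the reparametrised ray $s\mapsto u_{st}$ — both emanating from $u_0$ at $s=0$ — convexity of $s\mapsto d_1(u_{st},q^t_s)$ evaluated at $s=s_0/t$ gives
$$d_1(u_{s_0},\mathcal M^1)\le d_1(u_{s_0},q^t_{s_0/t})\le \tfrac{s_0}{t}\,d_1(u_t,p_t)\le \tfrac{s_0(C'+1)}{t}\xrightarrow[t\to\infty]{}0,$$
so $u_{s_0}\in\mathcal M^1$; since $s_0>0$ is arbitrary, $u_t\in\mathcal M^1=G.u_0$ for all $t\ge 0$.

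It then remains to identify the ray inside $G.u_0$. By the polar decomposition $g=k\,\textup{exp}_I(JW)$ with $k\in K$, $W\in\mathfrak k$ \cite[Propositions 6.2 and 6.9]{dr}, together with $k.u_0=u_0$, a conjugation shows every point of $G.u_0$ has the form $\textup{exp}_I(JW).u_0$, $W\in\mathfrak k$; moreover, $u_0$ being cscK, the map $\mathfrak k\ni W\mapsto\textup{exp}_I(JW).u_0$ is injective and each $t\mapsto\textup{exp}_I(tJW).u_0$ is a $d_1$-geodesic ray (see \cite[Sections 6--7]{dr}). Writing $u_T=\textup{exp}_I(JW_T).u_0$ for $T>0$, the curves $s\mapsto\textup{exp}_I(sJW_T).u_0$ and $s\mapsto u_{sT}$ are both finite energy geodesics from $u_0$ to $u_T$, hence equal, so $u_s=\textup{exp}_I(\tfrac{s}{T}JW_T).u_0$ on $[0,T]$; applying this for $T$ and $2T$ and invoking the injectivity forces $\tfrac1T W_T$ to be independent of $T$, say $V\in\mathfrak k=\textup{isom}(X,\omega_{u_0})$, and then $u_t=\textup{exp}_I(tJV).u_0$. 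The main obstacle, I expect, is assembling the metric ingredients of the middle step — convexity of $d_1$ along finite energy geodesics, the isometric $G$-action, and the geodesic convexity and orbit description of $\mathcal M^1$ — after which the argument is essentially formal.
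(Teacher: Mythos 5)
Your proof is correct and takes a genuinely different route from the paper's. The paper works directly with the decomposition $g_k=h_k\exp_I(-JV_k)$ of the group elements realizing the bound on $J_\omega$: after normalizing the ray to unit speed it shows $\|JV_k\|\sim k$, extracts a subsequential limit $V=\lim V_{k_j}/k_j$ by finite-dimensionality of $\textup{isom}(X,\omega_{u_0})$, and then uses convexity of $d_1$ along the pair of geodesics $t\mapsto u_t$ and $t\mapsto\exp_I(tJV_k/k).u_0$ (on $[0,k]$) to get $d_1(u_t,\exp_I(tJV_k/k).u_0)\le Ct/k$ and hence $u_t=\exp_I(tJV).u_0$ in the limit. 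You instead invoke Theorem~\ref{thm: reguarity theorem} to identify $\mathcal M^1=G.u_0$, use convexity of $d_1$ against geodesics \emph{inside} $\mathcal M^1$ (rather than against the one-parameter flows) to force $u_t\in\mathcal M^1$ for all $t$, and only then identify the ray within the orbit via uniqueness of finite energy geodesics and injectivity of $W\mapsto\exp_I(JW).u_0$. Both arguments hinge on the same ingredients — the Cartan-type decomposition from \cite[Props.~6.2, 6.9]{dr}, comparability of $J_\omega$ with $d_1$, isometry of the $G$-action, and convexity of $d_1$ along pairs of geodesics \cite[Prop.~5.1]{bdl} — and the scaling estimate $d_1(u_{s_0},q^t_{s_0/t})\le\frac{s_0}{t}(C'+1)$ is the exact analogue of the paper's $d_1(u^k_t,u_t)\le Ct/k$. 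The trade-off: the paper's proof is self-contained and avoids any dependence on the regularity theorem, whereas yours is conceptually cleaner (it cleanly separates ``the ray stays in $\mathcal M^1$'' from ``rays in $G.u_0$ are flows'') but consumes Theorem~\ref{thm: reguarity theorem} as input; since the lemma is only used downstream of that theorem in the paper this creates no circularity, but it is worth flagging as a heavier dependency.
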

\begin{proof}Let $g_k \in G$ such that $J_\omega(g_k.u_k) < C$. As $u_0$ is a csck potential there exists $h_k \in \textup{Isom}_0(X,\omega_{u_0})$ and a Hamiltonian vector field $V_k \in \textup{isom}(X,\omega_{u_0})$ such that $g_k = h_k \textup{exp}_I(-JV_k)$ (see \cite[Propositions 6.2 and 6.9]{DR17}).  As the growth of the $J+\o$ functional is the same as that of the $d_1$ metric \cite[Proposition 5.5]{DR17}, and $G$ acts by $d_1$-isometries on $\mathcal E^1 \cap \AM^{-1}(0)$ \cite[Lemma 5.9]{DR17}, by possibly increasing the constant $C$ we can write:
\begin{equation}\label{eq: geodineqineq}
C > d_1(u_0,g_k.u_k)=d_1(g_k^{-1}u_0,u_k)=d_1(\textup{exp}(JV_k).u_0,u_k).
\end{equation}
We can assume without loss of generality that $t \to u_t$ has unit $d_1$-speed, i.e., $d_1(u_0,u_t)=t$. Using the above inequality, the triangle inequality gives the following double estimate: 
$$k - C \leq d_1(u_0,\textup{exp}_I(JV_k).u_0) \leq k +C.$$
The analytic expression of $\textup{exp}_I(JV_k).u_0$ (see \cite[Lemma 5.8]{DR17}) implies that in fact $1/D \leq \| JV_k/k\| \leq D$ for some $D>1$. As the space of holomorphic Hamiltonian Killing fields of $(X,\omega_{u_0},J)$ is finite dimensional, it follows that there exists a nonzero Killing field $V$ such that $V_{k_j}/{k_j} \to V$ for some $k_j \to \infty$. 

Let us introduce the smooth $d_1$-geodesic segments
\[
[0,k] \ni t \to u^k_t = \textup{exp}_I\Big(t \frac{JV_k}{k}\Big).u_0 \in \Hnormalize.
\]
By \cite[Proposition 5.1]{bdl} the function $t \to d_1(u^k_t,u_t)$ is convex, hence \eqref{eq: geodineqineq} gives that $d_1(u^k_t,u_t) \leq Ct/k, \ t \in [0,k]$. This implies that for fixed $t$ we have $d_1(u^k_t,u_t) \to 0$. But examining convergence in the expressions defining $u^{k_j}_t = \textup{exp}_I(tJV_{k_j}/{k_j}).u_0$ we conclude that $u^{k_j}_t \to \textup{exp}_I(tJV).u_0$ smoothly, ultimately giving $u_t =\textup{exp}_I(tJV).u_0$.
\end{proof}

In case the K\"ahler class is integral, we have the following addendum to the previous lemma:

\begin{lemma} \label{lem: vectorlift} Suppose $(L,h) \to X$ is a hermitian line bundle with $\omega:=\Theta(h)>0$. Let $\phi_0 \in \Homega$, a real holomorphic Hamiltonian vector field $V \in \textup{isom}(X,J,\omega_{u_0})$, and $[0,\infty) \ni t \to \phi_t \in \mathcal E^1$ a geodesic ray. If the ``normalization" of $t \to \phi_t$ is induced by $V$, i.e., $\phi_t-\AM(\phi_t) = \textup{exp}_I(tJV).(\phi_0-\AM(\phi_0))$, then it is possible to find a lift $\tilde V$ of $V$ to the line bundle $L \to X$ such that $\textup{exp}_I(tJ\tilde V)^* he^{-\phi_0} =he^{-\phi_t}.$
\end{lemma}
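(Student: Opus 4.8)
The plan is to first produce a lift of $V$ as a holomorphic vector field on the total space of $L$ that generates a $\mathbb{C}^*$-equivariant action, and then to track how the chosen metric $he^{-\phi_0}$ transforms under the flow, matching it against the geodesic ray. Since $V \in \textup{isom}(X,J,\omega_{u_0})$ is a real holomorphic Hamiltonian vector field, $JV$ is also real holomorphic and the complexification $W := V - iJV$ is a holomorphic vector field on $X$. Because $c_1(L) = [\omega]$ is the class we are polarizing with, the obstruction to lifting $W$ to a holomorphic vector field on $L$ lies in $H^1(X,\calO_X)$; but a Hamiltonian holomorphic field (one with a zero, or more precisely one lying in the image of the moment map construction) always lifts, and in fact the Hamiltonian potential of $V$ with respect to $\omega$ provides an explicit formula for the lift. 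Concretely, I would use the standard recipe: if $V$ has Hamiltonian $H$ with respect to $\omega = \Theta(h)$, normalized appropriately, then the lift $\tilde V$ acts on local frames $e_L$ of $L$ by $\tilde V(e_L) = (\text{holomorphic part determined by } W)\, e_L$, and the vertical correction is pinned down by requiring compatibility with $h$ up to the Hamiltonian. This is the same lift used in Kempf--Ness theory and in \cite{b}.

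Next I would analyze the one-parameter group $\sigma_t := \textup{exp}_I(tJ\tilde V)$ acting on $L$, which covers the flow $\textup{exp}_I(tJV)$ on $X$. Pulling back the hermitian metric, $\sigma_t^* h = h e^{-\psi_t}$ for some $\psi_t \in C^\infty(X)$ with $\psi_0 = 0$, and a direct computation of $\frac{d}{dt}\psi_t$ identifies $\psi_t$ in terms of the Hamiltonian of $JV$ (this is where the explicit formula for the lift pays off). Combining this with the transformation law for potentials under $G$ on the level of $\mathcal H_\omega$ — namely $\textup{exp}_I(tJV).\phi_0$ is the potential of $(\textup{exp}_I(tJV))^*\omega_{\phi_0}$ relative to $\omega$ — I would get that $\sigma_t^*(he^{-\phi_0}) = he^{-\eta_t}$ where $\eta_t = \textup{exp}_I(tJV).\phi_0 + c_t$ for a time-dependent constant $c_t$ coming from the vertical freedom in $\tilde V$. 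Since $\psi_t$, hence $c_t$, can be adjusted by replacing $\tilde V$ with $\tilde V + (\text{constant vertical field})$ — equivalently by rescaling the lift by a character of $\mathbb{C}^*$ — I have one constant's worth of freedom to arrange the normalization.

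Finally I would fix this remaining constant using the hypothesis. We are told $\phi_t - \AM(\phi_t) = \textup{exp}_I(tJV).(\phi_0 - \AM(\phi_0))$, so $\phi_t = \textup{exp}_I(tJV).\phi_0 + (\AM(\phi_t) - \AM(\phi_0)) + (\text{terms from how }\AM\text{ interacts with the }G\text{-action})$, which after bookkeeping says $\phi_t$ equals $\textup{exp}_I(tJV).\phi_0$ up to an explicit, time-dependent additive constant $b_t$ with $b_0 = 0$. Choosing the vertical normalization of $\tilde V$ so that $c_t = b_t$ — this is possible precisely because $t \mapsto b_t$ is the restriction of a linear function of $t$ (the $\AM$ of a flow line of a holomorphic field grows linearly, since $t \to \AM(\textup{exp}_I(tJV).\phi_0)$ is affine by the product-rule computation in \cite[Section 6,7]{dr}), matching the linear-in-$t$ growth of $c_t = t \cdot (\text{const})$ — yields $\sigma_t^*(he^{-\phi_0}) = he^{-\phi_t}$, which is exactly the claim with $\tilde V$ the corrected lift. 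The main obstacle I anticipate is the careful bookkeeping in the second and third steps: keeping straight the difference between the $G$-action on \emph{metrics on $L$} versus on \emph{$\omega$-psh potentials}, and verifying that the one free additive constant is genuinely enough — i.e., that no higher-order-in-$t$ discrepancy survives. This reduces to the affineness of $t \mapsto \AM$ along holomorphic flows, which is standard, so I expect it to go through.
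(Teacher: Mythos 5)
Your plan reaches the same conclusion but by a genuinely different mechanism than the paper. Both proofs start from the same place: lift $V$ to a holomorphic field $\tilde V$ on $L$ (Donaldson's lemma; in the paper this is made concrete via the Kempf--Ness-type formula $\tilde V^{\CC} = V^{\CC}_{hor} + 2iv\,W^{\CC}$ from \cite{b2}, where $v$ is the Hamiltonian of $V$ and $W^{\CC}$ the fiberwise Euler field), and observe that for any lift the difference between $\exp_I(tJ\tilde V)^* he^{-\phi_0}$ and $he^{-\phi_t}$ is a function of $t$ alone, so the only issue is to kill that constant. Where you diverge is in how the constant is eliminated. The paper does it pointwise and concretely: it records the orbit formula $\phi_t(x)-\phi_0(x)=2\int_0^t v(\exp_I(lJV)x)\,dl$, fixes a critical point $x_0$ of $v$ (so the flow fixes $x_0$ and acts on the fiber $L_{x_0}$ purely by a scalar $e^{-v(x_0)t}$), and compares the two sides of the desired identity at $x_0$; agreement there for all $t$ forces $f(t)\equiv 0$. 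You instead argue abstractly: the remaining freedom is a one-parameter family of vertical rescalings, contributing a linear-in-$t$ term, and you claim both your discrepancy $c_t$ and the normalization discrepancy $b_t$ are linear in $t$, so a single slope adjustment matches them.

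This works, but you should be aware that the linearity of $c_t$ is not as immediate as you make it sound. The linearity of $b_t$ you correctly attribute to the affineness of $\AM$ along a geodesic ray, but for $c_t$ (the constant by which $\sigma_t^*(he^{-\phi_0})$ differs from the $\AM$-normalized orbit potential $g_t.(\phi_0-\AM(\phi_0))$) you just assert $c_t = t\cdot(\text{const})$. To justify it one can note that $\chi_t$, the potential of $\sigma_t^*(he^{-\phi_0})$, is a one-parameter $G$-orbit, hence a geodesic ray, so $c_t = \AM(\chi_t)$ is again affine (and vanishes at $t=0$); alternatively one can argue directly from the geodesic equation that the difference of two geodesics that is spatially constant must be affine in $t$. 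Either way this is a genuine step that your write-up glosses over, whereas the paper's critical-point trick bypasses it entirely and is more self-contained. In exchange, your route isolates the conceptual point — one Hamiltonian-normalization constant to fix, linear in $t$ — more cleanly than the coordinate computation in the paper.
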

This is essentially well-known, but as we could not find an adequate reference we include a proof here.
\begin{proof}
It is shown in \cite[Lemma 12]{do1} that it is possible to lift $V$ to a vector field $\tilde V$ on $L \to X$. Below we recall the construction of $\tilde V$ and show that one of the lifts satisfies the required properties.

By computing the curvature of both sides and using the $dd^c$ lemma, we see that for any lift there exists a smooth function $f: [0,\infty) \to \Bbb R$ such that
\begin{equation}\label{eq: generalpullback}
\textup{exp}_I(tJ\tilde V)^*he^{-\phi_0}=h e^{-\phi_t + f(t)}.
\end{equation}
We will show that for the right choice of $\tilde V$ we have $f(t) \equiv 0$. In fact, as it will be clarified below, it all depends on how we choose the Hamiltonian potential of $V$.

Suppose $v \in C^\infty(X)$ such that $i_V \omega_{\phi_0} = d v$. After perhaps adjusting $v$ by a constant, one can compute that (see \cite{ma0}, \cite[Example 4.26]{sz2}) 
$$\phi_t(x)-\phi_0(x) = 2 \int_0^t v(\textup{exp}_I(lJV)x)dl.$$
Let us fix $x_0 \in \textup{Crit}(v)$, i.e., $dv(x_0)=0$. This gives $V(x_0)=0$, hence by the above
\begin{equation}\label{eq: Kahlerlevel}
\phi_t(x_0)=\phi_0(x_0)+2tv(x_0).
\end{equation}

We now recall the main elements of \cite[Lemma 13]{b2} and its proof. For this, it will be more convenient to use the complex notation for holomorphic vector fields. To avoid confusion, recall that $V^\Bbb C = V - i JV$ and $V=\textup{Re }V:=(V^\Bbb C + \overline {V^\Bbb C})/2$.

Let $(z_1,\ldots,z_n)$ be coordinates on $X$ in a neighborhood $U$ of $x_0$. Let $s$ be a non-vanishing section of $L$ on $U$ and we introduce $e^{-\phi(z)}:=he^{-\phi_0}(s,\bar s)(z)$. Let $W^\Bbb C$ be the generator of the natural $\Bbb C^*$-action along the fibres of $L$. In local holomorphic coordinates $(z_1,\ldots,z_n,w)$ of $L \to X$ on $U$ we have $W^\Bbb C = w \frac{\del}{\del w}$ and $V^{\Bbb C}=V^j \frac{\del }{\del z_j}= -2i \phi^{j\bar k} v_{\bar k}\frac{\del }{\del z_j}$ (note the missing factor in the corresponding formula in the proof of \cite[Lemma 13]{b2}), where we have used that $V$ is Hamiltonian (in holomorphic coordinates $i V^j \phi_{j\bar k} =2 v_{\bar k}$).

If $V^\Bbb C_{hor}$ is the horizontal lift of $V^\Bbb C$ with respect to the connection of the metric $he^{-\phi_0}$ on $L$, then one can compute that $V^\Bbb C_{hor} = V^\Bbb C + \del \phi(V^\Bbb C) W^\Bbb C$. 
An elementary calculation gives that  
$$\tilde V^\Bbb C = V^\Bbb C_{hor} + 2i v W^\Bbb C=V^\Bbb C + W^\Bbb C (\del \phi(V^\Bbb C)+2iv)$$
is a holomorphic lift of $V$ to $L \to X$.
By this last formula, at the critical point $x_0$ we actually have $J \tilde V^\Bbb C(x_0)=-2v(x_0)w \frac{\del}{\del w}$. This immediately gives that the flow of $J\tilde V = \textup{Re }J\tilde V^\Bbb C$ satisfies $\textup{exp}_I(tJ\tilde V)(x_0,w )=(x_0, e^{-v(x_0)t}w)$, ultimately implying
\begin{equation}\label{eq: linebundlelevel}
\textup{exp}_I(tJ\tilde V)^*he^{-\phi_0(x_0)}(x_0)=h e^{-\phi_0(x_0) - 2t v(x_0)}(x_0).
\end{equation}
A comparison of \eqref{eq: generalpullback}, \eqref{eq: Kahlerlevel}  and \eqref{eq: linebundlelevel} gives that $f(t) \equiv 0$, finishing the proof.
\end{proof}

Lastly, we recall a result from \cite{b} that was the last important element in the proof of Theorem \ref{thm: Kpolystable}:

\begin{proposition}\label{prop: Berman_prop} Let $X$ be a K\"ahler manifold with positive line bundle $L \to X$ and a normal test configuration $(\mathcal X,\mathcal L,\pi,\rho)$ with $S^1$-invariant smooth background metric $h$, and reduced central fiber $X_0$. Given $\phi_0 \in \mathcal H_{\Theta(h_1)}$, suppose that the associated geodesic ray $t \to \phi_t$ is induced by a vector field $\tilde V$ of $L \to X$, i.e., $\textup{exp}_I(tJ\tilde V)^*h_1e^{-\phi_0}=h_1e^{-\phi_t}$. Then  $\tilde V$ is the generator of a  $\Bbb C^*$-action, and
 $\mathcal X$ is isomorphic to $X \times \Bbb C$.
\end{proposition}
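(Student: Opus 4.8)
The plan is to construct a product test configuration directly out of $\tilde V$ and then identify it with $(\mathcal X,\mathcal L,\pi,\rho)$ by combining the uniform bound on the envelope metric from \eqref{eq: geodtestconf_ineq} with the reducedness of $X_0$.

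\emph{Step 1 (from $\tilde V$ to a product test configuration).} First I would upgrade $\tilde V$ to a genuine algebraic $\mathbb{C}^*$-action $\lambda\mapsto\sigma_\lambda$ on $(X,L)$ with $\sigma_{e^{-t/2}}$ covering the flow $\textup{exp}_I(tJ\tilde V)$. The reason this is algebraic, and not merely a torus flow, is that $t\mapsto\phi_t$ is the Phong--Sturm ray of $(\mathcal X,\mathcal L,\pi,\rho)$, so its orbit behaviour is governed by the algebraic action $\rho$: up to the normalization by $\AM$ and the bounded correction in \eqref{eq: geodtestconf_ineq}, $\phi_t$ is the $\rho$-orbit of $\phi_0$. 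Comparing with the hypothesis $\textup{exp}_I(tJ\tilde V)^* h_1e^{-\phi_0}=h_1e^{-\phi_t}$ shows that the flow of $J\tilde V$ coincides with $\rho_{e^{-t/2}}$ modulo the compact stabilizer $\textup{Isom}_0(X,\omega_{\phi_0})$, hence is periodic. Then $\sigma$ yields the product test configuration $\mathcal X':=X\times\mathbb{C}$, $\mathcal L':=\textup{pr}_1^*L$, $\pi':=\textup{pr}_2$, $\rho'_\lambda(x,s):=(\sigma_\lambda x,\lambda s)$, whose associated geodesic ray is exactly $t\mapsto\sigma_{e^{-t/2}}^*(h_1e^{-\phi_0})$, i.e. $t\mapsto\phi_t$ by hypothesis.

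\emph{Step 2 (comparison away from the central fibre).} The two $\mathbb{C}^*$-actions give $\mathbb{C}^*$-equivariant trivializations $\mathcal X|_{\mathbb{C}^*}\cong X\times\mathbb{C}^*\cong\mathcal X'|_{\mathbb{C}^*}$, and likewise for $\mathcal L$, hence an equivariant isomorphism $\Xi\colon(\mathcal X,\mathcal L)|_{\mathbb{C}^*}\xrightarrow{\sim}(\mathcal X',\mathcal L')|_{\mathbb{C}^*}$. Under $\Xi$ the bounded metric $he^{-\phi^{\mathcal X}}$ on $\mathcal L$ is carried, over $\{0<|\tau|<1\}$, to a positively curved metric on $\mathcal L'$ whose slice at $\tau$ is the pullback of $h_1e^{-\phi_0}$ by $\sigma_\tau^{-1}$; by the explicit product structure, and because \eqref{eq: geodtestconf_ineq} keeps everything bounded, this metric extends across $X\times\{0\}$ to one equivalent to $\textup{pr}_1^*(h_1e^{-\phi_0})$. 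The only analytic input here is the two-sided bound \eqref{eq: geodtestconf_ineq}.

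\emph{Step 3 (extending $\Xi$ across the central fibres — the main obstacle).} It remains to extend the equivariant isomorphism $\Xi$ over $\tau=0$, and this is where boundedness of $\phi^{\mathcal X}$ and reducedness of $X_0$ genuinely enter. For $k\gg0$ and $s\in H^0(\mathcal X,\mathcal L^k)$, estimate \eqref{eq: geodtestconf_ineq} makes $\int_{X_\tau}|s|^2_{h^k}\,\omega_\tau^n$ bounded above and below uniformly for $\tau$ in the closed disc, $\tau=0$ included, so no nonzero section degenerates in the flat limit; hence the restriction map $H^0(\mathcal X,\mathcal L^k)\to H^0(X_0,\mathcal L^k|_{X_0})$ has maximal rank $h^0(X,L^k)$. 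Since $X_0$ is reduced with the same Hilbert polynomial as $X$, this forces $(X_0,\mathcal L^k|_{X_0})\cong(X,L^k)$ compatibly with the $\mathbb{C}^*$-action, so $\Xi^*$ identifies $\pi_*\mathcal L^k$ with $\pi'_*\mathcal L'^k$ as $\mathbb{C}^*$-equivariant $\mathcal O_{\mathbb{C}}$-algebras; taking relative $\mathrm{Proj}$ realizes $\Xi$ as the restriction of an isomorphism of projective families $\mathcal X\xrightarrow{\sim}X\times\mathbb{C}$ intertwining $\rho$ and $\rho'$, so $\mathcal X$ is a product. The delicate point is precisely this promotion of uniform $L^2$-control on sections to a scheme-theoretic identification of the central fibres — the ``$C^0$-estimate'' step, where normality of $\mathcal X$ and reducedness of $X_0$ are used (cf. the variant of \eqref{eq: normality_est}, where a Moser iteration serves to pass from integral to pointwise bounds). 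I also expect the algebraicity assertion of Step 1 to require care, since a priori the flow of $\tilde V$ could generate a higher-rank torus; the resolution is that it must be subordinate to the $\mathbb{C}^*$-action $\rho$ already carried by the test configuration.
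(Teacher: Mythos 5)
Your approach diverges from the paper's and has a genuine gap in Step~3 (and a lesser one in Step~1).

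The paper's proof is much more direct and sidesteps both of your delicate points. It introduces $f_t = \textup{exp}_I(-\log t^2\, J\tilde V) \in \textup{Aut}_0(X,L)$ and $G_t = \rho_t \circ f_{t^{-1}} \colon X_1 \to X_t$, a genuine family of biholomorphisms for each real $t>0$ (no need to decide whether the flow of $J\tilde V$ closes up into an algebraic $\mathbb{C}^*$-action). After embedding into $\mathbb{CP}^N\times\mathbb{C}$, the estimate \eqref{eq: geodtestconf_ineq} transforms directly into the two-sided bound \eqref{eq: normality_est} on $G_t^*h^{FS}$, which makes $\{G_t\}_t$ a normal family of holomorphic maps into $\mathbb{CP}^N$. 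One then passes to a subsequential limit $G_0\colon X_1\to X_0$ and shows, following \cite[Proposition 3.3]{b}, that $G_0$ is a bijection onto $X_0$; this is where normality and reducedness of $X_0$ enter. The isomorphism $\mathcal X\cong X\times\mathbb{C}$ is thereby produced explicitly, not abstractly.

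The gap in your Step~3 is the inference ``restriction $H^0(\mathcal X,\mathcal L^k)\to H^0(X_0,\mathcal L^k|_{X_0})$ has maximal rank, and $X_0$ is reduced with the same Hilbert polynomial, therefore $(X_0,\mathcal L^k|_{X_0})\cong(X,L^k)$.'' This does not follow. A reduced, polarized degeneration with constant Hilbert polynomial and well-behaved restriction of sections need not have central fiber isomorphic to the general fiber — think of a family of smooth curves degenerating to a reduced nodal curve; the sections restrict fine and the Hilbert polynomial is locally constant, yet the fibers are not isomorphic. The $L^2$-bounds on sections coming from \eqref{eq: geodtestconf_ineq} give you that sections do not die in the limit, but not that the limit variety is isomorphic to $X$. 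What is missing is precisely the construction of an actual map $X\to X_0$ and a proof that it is a bijection — which is exactly what the paper's normal-family argument supplies and your proposal does not. (Your reference to a Moser iteration / ``$C^0$-estimate'' is borrowed from the remark about Tian's alternative route in the paper, but you have not actually run that argument, and in any case it would be a substitute for the normal-family bound, not for the bijectivity step.)

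Your Step~1 is also unresolved. You correctly flag that the flow of $J\tilde V$ might a priori generate a dense winding in a higher-rank torus rather than an algebraic $\mathbb{C}^*$-action, but the claim that ``it must be subordinate to the $\mathbb{C}^*$-action $\rho$'' is not an argument; $\tilde V$ is produced from the bounded-$J$ assumption on the ray (Lemma \ref{lem: JG_bounded_ray}), and nothing there forces rationality. The paper never needs this: it works with real time $t>0$ and the continuous family $G_t$, so the issue never arises. If you want to pursue your algebraic route you would need to prove rationality of $\tilde V$ separately, or else reformulate Steps~2--3 to avoid needing an honest $\mathbb{C}^*$-action $\sigma$.
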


This proposition is contained in \cite[Lemma 3.4]{b}. Strictly speaking, the  statement above (that $\mathcal X$ is isomorphic to $X \times \Bbb C$) does not appear explicitly in the statement of \cite[Lemma 3.4]{b}, but the proof of  \cite[Lemma 3.4]{b} does establish the isomorphism in question (as pointed out after formula (3.16) in \cite{b}).   

\let\OLDthebibliography\thebibliography 
\renewcommand\thebibliography[1]{
  \OLDthebibliography{#1}
  \setlength{\parskip}{1pt}
  \setlength{\itemsep}{1pt plus 0.3ex}
}

\noindent{\sc Chalmers University of Technology}

\noindent{\tt robertb@chalmers.se} \vspace{0.1in}

\noindent{\sc University of Maryland}

\noindent{\tt tdarvas@math.umd.edu}\vspace{0.1in}

\noindent{\sc Laboratoire de Math\'ematiques d'Orsay,
 Univ. Paris-Sud,
 CNRS, Universit\'e Paris-Saclay,
  91405 Orsay, France}
  
\noindent{\tt hoang-chinh.lu@u-psud.fr}
\end{document}